\documentclass[11pt]{article}
\usepackage[margin=1.5in]{geometry}

\usepackage{hyperref}
\usepackage{amsmath}
\usepackage{amsthm}
\usepackage{amssymb}
\usepackage{amsfonts}
\usepackage{multicol}
\usepackage{subfigure}
\usepackage[numbers,sort&compress]{natbib}
\usepackage{booktabs}
\usepackage{titlesec}
\usepackage{longtable,tabu}

\usepackage{enumitem}
\setlist[enumerate]{label={\arabic*.}}

\usepackage{color}

\usepackage{tikz}
\usetikzlibrary{calc}

\usepackage{tkz-graph}

\newtheorem{theorem}{Theorem}[section]
\newtheorem{lemma}[theorem]{Lemma}
\newtheorem{proposition}[theorem]{Proposition}

\newtheorem{corollary}[theorem]{Corollary}

\newtheorem{claim}[theorem]{Claim}

\theoremstyle{definition}

\newtheorem{conjecture}[theorem]{Conjecture}
\newtheorem{problem}{Open Problem}

\newcommand{\squishlist}{
 \begin{list}{$\bullet$}
  { \setlength{\itemsep}{0pt}
     \setlength{\parsep}{3pt}
     \setlength{\topsep}{3pt}
     \setlength{\partopsep}{0pt}
     \setlength{\leftmargin}{2.5em}
     \setlength{\labelwidth}{1em}
     \setlength{\labelsep}{0.5em} } }

\newcommand{\squishlisttwo}{
 \begin{list}{$\triangleright$}
  { \setlength{\itemsep}{0pt}
     \setlength{\parsep}{0pt}
    \setlength{\topsep}{0pt}
    \setlength{\partopsep}{0pt}
    \setlength{\leftmargin}{2em}
    \setlength{\labelwidth}{1.5em}
    \setlength{\labelsep}{0.5em} } }

\newcommand{\squishend}{
  \end{list}  }
  
\newcommand{\kcol}{$k$-\textsc{Colouring}}

\begin{document}

\title{Vertex-critical graphs in co-gem-free graphs}
\author{
 Iain Beaton\\ 
 \small Department of Mathematics and Statistics\\
 \small Acadia University\\
 \small Wolfville, NS Canada\\
 \small iain.beaton@acadiau.ca\\
 \and
 Ben Cameron\\ 
 \small School of Mathematical and Computational Sciences\\
 \small University of Prince Edward Island\\
 \small Charlottetown, PE Canada\\
 \small brcameron@upei.ca\\
}

\date{\today}

\maketitle

\begin{abstract}
A graph $G$ is $k$-vertex-critical if $\chi(G)=k$ but $\chi(G-v)<k$ for all $v\in V(G)$ and $(G,H)$-free if it contains no induced subgraph isomorphic to $G$ or $H$. We show that there are only finitely many $k$-vertex-critical (co-gem, $H$)-free graphs for all $k$ when $H$ is any graph of order $4$ by showing finiteness in the three remaining open cases, those are the cases when $H$ is $2P_2$, $K_3+P_1$, and $K_4$. 
For the first two cases we actually prove the stronger results:
\begin{itemize}
    \item There are only finitely many $k$-vertex-critical (co-gem, paw$+P_1$)-free graphs for all $k$ and that only finitely many $k$-vertex-critical (co-gem, paw$+P_1$)-free graphs for all $k\ge 1$.
    \item  There are only finitely many $k$-vertex-critical (co-gem, $P_5$, $P_3+cP_2$)-free graphs for all $k\ge 1$ and $c\ge 0$.
\end{itemize}
To prove the latter result, we employ a novel application of Sperner's Theorem on the number of antichains in a partially ordered set. Our result for $K_4$ uses exhaustive computer search and is proved by showing the stronger result that every $(\text{co-gem, }K_4)$-free graph is $4$-colourable. Our results imply the existence of simple polynomial-time certifying algorithms to decide the $k$-colourability of (co-gem, $H$)-free graphs for all $k$ and all $H$ of order $4$ by searching the vertex-critical graphs as induced subgraphs. 

\end{abstract}

\section{Introduction}
For a fixed $k$, we say a graph $G$ is $k$-colourable if there exists a function $\phi:V(G) \to \{1,2,\ldots k\}$ such that $\phi(u) \neq \phi(v)$ for every edge $uv \in E(G)$. The \kcol{} decision problem takes a graph $G$ as input and returns true or false based on the $k$-colourability of $G$. In 1972, Karp~\cite{Karp1972} showed for every fixed $k\ge 3$ that \kcol{} is NP-complete. When the input graphs are restricted to come from a specific family, however, polynomial-time algorithms can sometimes be developed. One such type of family is that of $H$-free graphs which are those that do not contain the graph $H$ as an induced subgraph. Ho\`{a}ng et al.~\cite{Hoang2010} gave polynomial-time algorithms to solve \kcol{} for all $k$ for the family of $P_5$-free graphs. In fact, $P_5$ is the largest connected graph that can be forbidden as an induced subgraph where \kcol{} can be solved in polynomial-time for all $k$ (assuming P$\neq$NP). This is because \kcol{} remains NP-complete for claw-free graphs~\cite{Holyer1981,LevenGail1983},  $C_n$-free graphs for any $n\ge 3$~\cite{KaminskiLozin2007} (for $k\ge 3$), and $P_6$-free graphs~\cite{Huang2016} (for $k\ge 5$, but polynomial-time solvable for $P_6$-free graphs when $k \le 4$~\cite{P6freeconf,P6free1,P6free2}). Beyond forbidding connected induced graphs, it is known that \kcol{} can be solved in polynomial-time for all $k\ge 1$ and $r\ge 0$ for $H$-free graphs if $H$ is $P_5+rP_1$~\cite{Couturier2015} or $rP_3$~\cite{ChudnovskyHajebiSpirkl2024}. On the other hand, \kcol{} remains NP-complete for all $k\ge 5$ for $H$-free graphs when $H$ is $P_5+P_2$~\cite{ChudnovskyHuangSpirklZhong2021} or $2P_4$~\cite{HajebiLiSpirkl2022}. Thus, for all $k\ge 5$, it remains only to determine the complexity class of $(P_4+rP_3)$-free graphs for all $r \ge 1$.

In some, more restricted families than those mentioned above, polynomial-time \textit{certifying} algorithms to solve \kcol{} for any graph in the family can be obtained. An algorithm is said to be certifying if it returns an easily verifiable certificate of correctness with each output (see~\cite{McConnell2011}, for a survey).  For \kcol{} a certificate for positive output is a $k$-colouring, and for negative output, it is a $(k+1)$-vertex-critical induced subgraph where a graph is $k$-vertex-critical. A graph is $k$-vertex-critical if every proper induced subgraph of it is $(k-1)$-colourable, but it requires $k$ colours. Some of the algorithms listed above do certify positive output, for example the algorithms for $P_5$-free graphs~\cite{Hoang2010} and $(P_5+rP_1)$-free graphs~\cite{Couturier2015}, but none of them certify negative output. If a family of graphs contains only finitely many $(k+1)$-vertex-critical graphs, then a polynomial-time algorithm to solve \kcol{} that certifies negative output can be readily implemented by searching the input graph for each of the $(k+1)$-vertex-critical graphs as induced subgraphs, returning such an induced subgraph if one is found (see~\cite{P5banner2019}, for example). Thus, as expected, there are infinitely many $k$-vertex-critical claw-free graphs for all $k\ge 3$ (see \cite{Chud4critical2020} for a discussion of how such an infinite family follows from \cite{LazebnikUstimenko1995}, or the simple construction in~\cite{CameronHoangSawada2022}) and for $H$-free graphs when $H$ contains any cycle (this follows from a classical result of Erd\H{o}s~\cite{Erdos} that for each $k$, there exist $k$-chromatic graphs with arbitrarily large girth). Thus, the only $H$-free families of graphs which may contain only finitely many $k$-vertex-critical graphs for any $k\ge 4$ are those where every component of $H$ is a path. Since $P_4$-free graphs are perfect, the only such $k$-vertex-critical graph is $K_k$ for all $k$. For $P_5$-free graphs, there are only finitely many $4$-vertex-critical graphs~\cite{Hoang2015} and this finite list was used to develop a linear-time algorithm to solve $3$-\textsc{Colouring} for $P_5$-free graphs~\cite{MaffrayMorel2012}. However, for all $k\ge 5$, there are infinitely many $k$-vertex-critical $P_5$-free graphs~\cite{Hoang2015}. A construction for infinitely many $4$-vertex-critical $P_7$-free graphs was given in~\cite{Chudnovsky4criticalconnected2020}. For disconnected graphs where each component is a path, there are infinitely many $k$-vertex-critical $2P_2$-free graph for all $k\ge 5$~\cite{Hoang2015}, but there are only finitely many $k$-vertex-critical $(P_3+\ell P_1)$-free graphs for all $k\ge 1$ and $\ell \ge 0$~\cite{AbuadasCameronHoangSawada2022}. The most comprehensive result to this end is the following theorem.

\begin{theorem}
    [\cite{Chud4critical2020}]
    Let $H$ be a graph. There are only finitely many $4$-vertex-critical $H$-free graphs if and only if $H$ is an induced subgraph of $P_6$, $2P_3$, or $P_4+\ell P_1$ for some natural number $\ell$.
\end{theorem}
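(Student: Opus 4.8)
The plan is to prove the two implications separately, relying throughout on the basic monotonicity principle that if $F_1$ is an induced subgraph of $F_2$, then every $F_1$-free graph is also $F_2$-free. Consequently, finiteness of the collection of $4$-vertex-critical $F$-free graphs is inherited downward in the induced-subgraph order, while an infinite family of $4$-vertex-critical $F_1$-free graphs is simultaneously $F_2$-free for every $F_2$ containing $F_1$. Both halves of the argument are organized around this observation.

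For the sufficiency direction I would first use monotonicity to reduce the claim to the three forbidden graphs $P_6$, $2P_3$, and $P_4+\ell P_1$: if $H$ is an induced subgraph of one of these, then the $H$-free graphs sit inside the corresponding class, so it is enough to prove that the classes of $P_6$-free graphs, of $2P_3$-free graphs, and of $(P_4+\ell P_1)$-free graphs (the last for each fixed $\ell$) each contain only finitely many $4$-vertex-critical graphs. The standard route for each is to show that a $4$-vertex-critical graph in the class has bounded order: one exploits that such a graph has minimum degree at least $3$, no clique cutset, and no vertex whose neighbourhood is contained in that of another, and then uses the forbidden linear forest together with a Ramsey/structural argument to bound the number of vertices. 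I expect the $P_6$-free case to be by far the most delicate and to be the main obstacle of this direction, since bounding $4$-critical $P_6$-free graphs is itself a substantial (and historically computer-assisted) structural result; the $2P_3$-free and $(P_4+\ell P_1)$-free cases should be more tractable because the forbidden graph is disconnected and forces every long induced path and every large independent set to interact with a small bounded ``core.''

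For the necessity direction I would argue the contrapositive and first pin down the minimal obstructions. The key combinatorial observation is that a linear forest is an induced subgraph of one of $P_6$, $2P_3$, or $P_4+\ell P_1$ if and only if it is $(2P_2+P_1)$-free; the forward inclusion holds because $2P_2+P_1$ is easily checked not to embed in any of the three, and the reverse follows by a short case analysis on the number and sizes of components. Thus, among linear forests there is exactly one minimal forbidden graph, namely $2P_2+P_1$. Hence a ``bad'' $H$ (one not embedding in any of the three) must contain an induced claw $K_{1,3}$, an induced cycle $C_m$, or, being a linear forest, an induced $2P_2+P_1$. For each of these three minimal obstructions one exhibits an infinite family of $4$-vertex-critical graphs avoiding it: the Erd\H{o}s high-girth $4$-chromatic graphs, after passing to a vertex-minimal induced subgraph of chromatic number $4$, give $4$-vertex-critical graphs of arbitrarily large girth and handle the cycle case; the known infinite family of $4$-vertex-critical claw-free graphs handles the claw case; and an infinite family of $4$-vertex-critical $(2P_2+P_1)$-free graphs handles the last case. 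By monotonicity each family is $H$-free for every $H$ containing the corresponding obstruction, which yields infinitely many $4$-vertex-critical $H$-free graphs and completes the contrapositive.

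The hardest parts are the finiteness of $4$-critical $P_6$-free graphs on the sufficiency side and the explicit construction of infinitely many $4$-vertex-critical $(2P_2+P_1)$-free graphs on the necessity side. The latter is subtle because it must escape the regime of $2P_2$-free graphs, which admit only finitely many $4$-vertex-critical members: one is forced into graphs possessing an induced $2P_2$, and $(2P_2+P_1)$-freeness then requires that \emph{every} induced $2P_2$ be dominating, while the vertex-criticality that must be maintained as the family grows forbids the presence of non-adjacent twins. Reconciling these two constraints over an unbounded number of vertices is, I expect, the genuine technical heart of the argument.
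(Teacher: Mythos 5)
This theorem is stated in the paper purely as a quoted result with the citation \cite{Chud4critical2020}; the paper contains no proof of it, so your proposal can only be measured against the original source's argument, whose overall architecture your plan in fact correctly reconstructs. Your monotonicity reductions are sound in both directions, and your key combinatorial lemma is true: a linear forest is an induced subgraph of one of $P_6$, $2P_3$, $P_4+\ell P_1$ if and only if it contains no induced $2P_2+P_1$. (The case analysis you allude to does go through: if the forest is $2P_2$-free it is some $P_m+\ell P_1$ with $m\le 4$; if it contains an induced $2P_2$, then $(2P_2+P_1)$-freeness forces at most two components, and then either it is a path of order at most $6$, or both components are paths of order at most $3$.) Likewise, deducing that any ``bad'' $H$ contains a cycle, a claw, or an induced $2P_2+P_1$ is correct, since a forest with maximum degree at most $2$ is a linear forest, and the Erd\H{o}s high-girth argument combined with passing to a vertex-minimal induced subgraph of chromatic number $4$ legitimately handles the cycle case.

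The genuine gap is that everything technically hard is deferred rather than proved, so neither half of the equivalence is actually established. On the sufficiency side, finiteness of $4$-vertex-critical $P_6$-free graphs is itself the main theorem of a substantial, computer-assisted paper (the source of the ``exactly 80'' count quoted later in this paper); your proposed recipe of minimum degree at least $3$, no clique cutset, no comparable vertices, plus a ``Ramsey/structural argument'' does not bound the order of such graphs, and you concede as much. The $2P_3$-free and $(P_4+\ell P_1)$-free cases also each required their own nontrivial arguments in the original and are here only sketched as ``should be more tractable.'' On the necessity side, the entire contrapositive hinges on exhibiting infinitely many $4$-vertex-critical $(2P_2+P_1)$-free graphs, which you correctly identify as the technical heart (every induced $2P_2$ must be dominating, no comparable vertices) but do not construct; without an explicit family this direction is incomplete for the unique minimal linear-forest obstruction. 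In short: the skeleton, the reduction lemma, and the identification of $2P_2+P_1$ as the sole minimal obstruction are correct and match the cited proof's structure, but the proposal is a roadmap whose load-bearing theorems are all assumed.
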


From this result and the results on $2P_2$-free and $(P_3+\ell P_1)$-free graphs cited above, the only graphs $H$ where the finiteness of $k$-vertex-critical $H$-free graphs remains unknown for any $k\ge 5$ is $H=P_4+\ell P_1$ for all $\ell\ge 1$.  In this paper, we will look at subfamilies of the smallest remaining open case, $(P_4+P_1)$-free graphs, which we will refer to by their more commonly used name, co-gem-free graphs from now on (the gem being the complement of $P_4+P_1$). For this family of graphs, the only result that is explicitly known for co-gem-free graphs is that there are only finitely many $k$-vertex-critical (gem, co-gem)-free graphs for all $k\ge 1$~\cite{AbuadasCameronHoangSawada2022}, but there are some other results that follow as corollaries of results on vertex-critical $P_6$-free graphs since co-gem is an induced subgraph of $P_6$. Our work follows the lead of the extensive study of $P_5$-free graphs, where considering subfamilies of the form $(P_5,H)$-free graphs has been a topic of much research. One of the highlights of this area of research is the dichotomy theorem from~\cite{KCameron2021} that there are only finitely many $k$-vertex-critical $(P_5,H)$-free graphs for all $k\ge 1$ and for $H$ of order four if and only if $H$ is not $2P_2$ or $K_3+P_1$. Analogous to this, our main result in this paper is that there are only finitely many $k$-vertex-critical (co-gem, $H$)-free graphs for all $k\ge 1$ and all graphs $H$ of order four (See Figure~\ref{fig:graphsoforder4} for all nonisomorphic graphs of order four). 

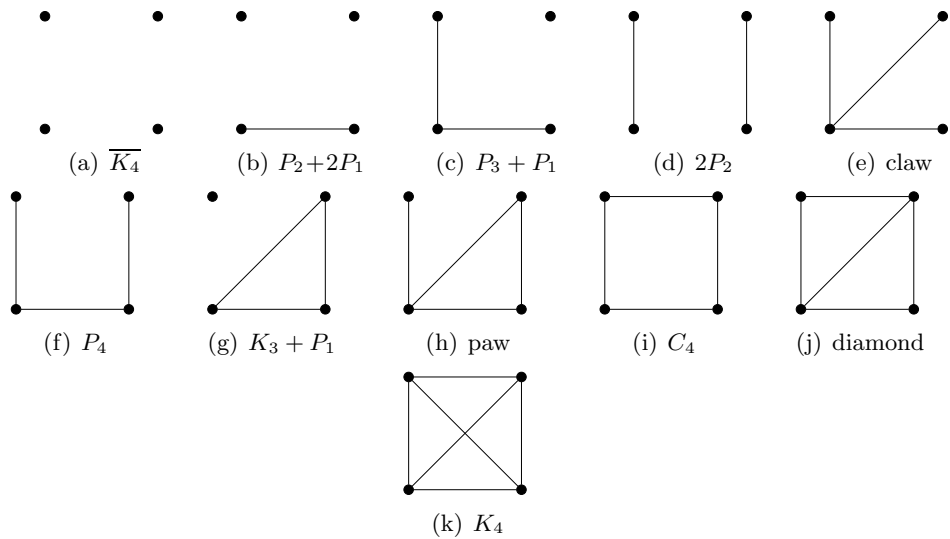
\begin{figure}[h]
    \centering
    \def\c{0.3}
\qquad\subfigure[$\overline{K_4}$]{\scalebox{\c}{\begin{tikzpicture}
\GraphInit[vstyle=Classic]
\Vertex[L=\hbox{},x=5.0cm,y=0.0cm]{v0}
\Vertex[L=\hbox{},x=0.0cm,y=5.0cm]{v1}
\Vertex[L=\hbox{},x=5.0cm,y=5.0cm]{v2}
\Vertex[L=\hbox{},x=0.0cm,y=0.0cm]{v3}
\end{tikzpicture}}}
\qquad\subfigure[$P_2+2P_1$]{\scalebox{\c}{\begin{tikzpicture}
\GraphInit[vstyle=Classic]
\Vertex[L=\hbox{},x=5.0cm,y=0.0cm]{v0}
\Vertex[L=\hbox{},x=0.0cm,y=5.0cm]{v1}
\Vertex[L=\hbox{},x=5.0cm,y=5.0cm]{v2}
\Vertex[L=\hbox{},x=0.0cm,y=0.0cm]{v3}
\Edge[](v0)(v3)
\end{tikzpicture}}}
\qquad\subfigure[$P_3+P_1$]{\scalebox{\c}{\begin{tikzpicture}
\GraphInit[vstyle=Classic]
\Vertex[L=\hbox{},x=5.0cm,y=0.0cm]{v0}
\Vertex[L=\hbox{},x=0.0cm,y=5.0cm]{v1}
\Vertex[L=\hbox{},x=5.0cm,y=5.0cm]{v2}
\Vertex[L=\hbox{},x=0.0cm,y=0.0cm]{v3}
\Edge[](v0)(v3)
\Edge[](v1)(v3)
\end{tikzpicture}}}
\qquad\subfigure[$2P_2$]{\scalebox{\c}{\begin{tikzpicture}
\GraphInit[vstyle=Classic]
\Vertex[L=\hbox{},x=5.0cm,y=0.0cm]{v0}
\Vertex[L=\hbox{},x=0.0cm,y=5.0cm]{v1}
\Vertex[L=\hbox{},x=5.0cm,y=5.0cm]{v2}
\Vertex[L=\hbox{},x=0.0cm,y=0.0cm]{v3}
\Edge[](v0)(v2)
\Edge[](v1)(v3)
\end{tikzpicture}}}
\qquad\subfigure[claw]{\scalebox{\c}{\begin{tikzpicture}
\GraphInit[vstyle=Classic]
\Vertex[L=\hbox{},x=5.0cm,y=0.0cm]{v0}
\Vertex[L=\hbox{},x=0.0cm,y=5.0cm]{v1}
\Vertex[L=\hbox{},x=5.0cm,y=5.0cm]{v2}
\Vertex[L=\hbox{},x=0.0cm,y=0.0cm]{v3}
\Edge[](v0)(v3)
\Edge[](v1)(v3)
\Edge[](v2)(v3)
\end{tikzpicture}}}
\qquad\subfigure[$P_4$]{\scalebox{\c}{\begin{tikzpicture}
\GraphInit[vstyle=Classic]
\Vertex[L=\hbox{},x=5.0cm,y=0.0cm]{v0}
\Vertex[L=\hbox{},x=0.0cm,y=5.0cm]{v1}
\Vertex[L=\hbox{},x=5.0cm,y=5.0cm]{v2}
\Vertex[L=\hbox{},x=0.0cm,y=0.0cm]{v3}
\Edge[](v0)(v2)
\Edge[](v0)(v3)
\Edge[](v1)(v3)
\end{tikzpicture}}}
\qquad\subfigure[$K_3+P_1$]{\scalebox{\c}{\begin{tikzpicture}
\GraphInit[vstyle=Classic]
\Vertex[L=\hbox{},x=5.0cm,y=0.0cm]{v0}
\Vertex[L=\hbox{},x=0.0cm,y=5.0cm]{v1}
\Vertex[L=\hbox{},x=5.0cm,y=5.0cm]{v2}
\Vertex[L=\hbox{},x=0.0cm,y=0.0cm]{v3}
\Edge[](v0)(v2)
\Edge[](v0)(v3)
\Edge[](v2)(v3)
\end{tikzpicture}}}
\qquad\subfigure[paw]{\scalebox{\c}{\begin{tikzpicture}
\GraphInit[vstyle=Classic]
\Vertex[L=\hbox{},x=5.0cm,y=0.0cm]{v0}
\Vertex[L=\hbox{},x=0.0cm,y=5.0cm]{v1}
\Vertex[L=\hbox{},x=5.0cm,y=5.0cm]{v2}
\Vertex[L=\hbox{},x=0.0cm,y=0.0cm]{v3}
\Edge[](v0)(v2)
\Edge[](v0)(v3)
\Edge[](v1)(v3)
\Edge[](v2)(v3)
\end{tikzpicture}}}
\qquad\subfigure[$C_4$]{\scalebox{\c}{\begin{tikzpicture}
\GraphInit[vstyle=Classic]
\Vertex[L=\hbox{},x=5.0cm,y=0.0cm]{v0}
\Vertex[L=\hbox{},x=0.0cm,y=5.0cm]{v1}
\Vertex[L=\hbox{},x=5.0cm,y=5.0cm]{v2}
\Vertex[L=\hbox{},x=0.0cm,y=0.0cm]{v3}
\Edge[](v0)(v2)
\Edge[](v0)(v3)
\Edge[](v1)(v2)
\Edge[](v1)(v3)
\end{tikzpicture}}}
\qquad\subfigure[diamond]{\scalebox{\c}{\begin{tikzpicture}
\GraphInit[vstyle=Classic]
\Vertex[L=\hbox{},x=5.0cm,y=0.0cm]{v0}
\Vertex[L=\hbox{},x=0.0cm,y=5.0cm]{v1}
\Vertex[L=\hbox{},x=5.0cm,y=5.0cm]{v2}
\Vertex[L=\hbox{},x=0.0cm,y=0.0cm]{v3}
\Edge[](v0)(v2)
\Edge[](v0)(v3)
\Edge[](v1)(v2)
\Edge[](v1)(v3)
\Edge[](v2)(v3)
\end{tikzpicture}}}
\qquad\subfigure[$K_4$]{\scalebox{\c}{\begin{tikzpicture}
\GraphInit[vstyle=Classic]
\Vertex[L=\hbox{},x=5.0cm,y=0.0cm]{v0}
\Vertex[L=\hbox{},x=0.0cm,y=5.0cm]{v1}
\Vertex[L=\hbox{},x=5.0cm,y=5.0cm]{v2}
\Vertex[L=\hbox{},x=0.0cm,y=0.0cm]{v3}
\Edge[](v0)(v1)
\Edge[](v0)(v2)
\Edge[](v0)(v3)
\Edge[](v1)(v2)
\Edge[](v1)(v3)
\Edge[](v2)(v3)
\end{tikzpicture}}}

    \caption{All 11 nonisomorphic graphs of order four.}
    \label{fig:graphsoforder4}
\end{figure}

\noindent For two of these graphs $H$ of order four we actually prove stronger results for some of their supergraphs of order at least five. Considering vertex-critical (co-gem, $H$)-free graphs for graphs $H$ of order five has a large body of analogous research for $(P_5, H)$-free graphs. In~\cite{KCameron2021}, the open problem was posed to extend the order-four dichotomy to one of order five and finiteness has been shown for all $k\ge 1$ for many such graphs $H$ including 
banner~\cite{Brause2022}, $K_{2,3}$ and $K_{1,4}$~\cite{Kaminski2019}, $P_2 + 3P_1$~\cite{CameronHoangSawada2022}, $P_3 + 2P_1$~\cite{AbuadasCameronHoangSawada2022}, $\overline{P_5}$~\cite{Dhaliwal2017}, $\overline{P_3 + P_2}$ and gem~\cite{CaiGoedgebeurHuang2021} (see also \cite{CameronHoang2023}), dart~\cite{Xiaetal2023}, $K_{1,3} + P_1$ and $\overline{K_3 + 2P_1}$~\cite{xia2024results}. It was also shown that there are infinitely many $k$-vertex-critical $(P_5,C_5)$-free graphs for all $k\ge 6$~\cite{CameronHoang2023}. 

\subsection{Outline}
The paper proceeds as follows. We prove that there are only finitely many $k$-vertex-critical (co-gem, $P_5, P_3+cP_2$)-free graphs for all $k\ge 1$ and $c\ge 1$ in Section~\ref{sec:2P2}, from which it follows as a corollary that there are only finitely many $k$-vertex-critical (co-gem, $2P_2$)-free graphs. Our proofs in this section use the result on vertex-critical ($P_3+\ell P_1$)-free graphs from~\cite{AbuadasCameronHoangSawada2022} as well as a novel application of Sperner's Theorem that we expect will be of interest. In Section~\ref{sec:traingleplusP1}, we prove that there are only finitely many $k$-vertex-critical (co-gem, paw$+P_1$)-free graphs for all $k\ge 1$, from which it follows that only finitely many $k$-vertex-critical (co-gem, $K_3+P_1$)-free graphs for all $k\ge 1$. Using the exhaustive computer search techniques developed in~\cite{Hoang2015} and refined and optimized in \cite{GoedgebeurSchaudt2018}, we show that there are no $5$-vertex-critical (co-gem, $K_4$)-free graphs, and therefore that all such graphs are $4$-colourable.  All of our results, together with previous work, are compiled together in Section~\ref{sec:proofofmainthm} to prove our main theorem:

\begin{theorem}\label{thm:cogemHfreeord4}
    There are only finitely many $k$-vertex-critical $(\text{co-gem, }H)$-free graphs for all $k\ge 1$ and all graphs $H$ of order four.
\end{theorem}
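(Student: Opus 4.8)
The plan is to prove Theorem~\ref{thm:cogemHfreeord4} by a straightforward case analysis over the eleven nonisomorphic graphs $H$ of order four (Figure~\ref{fig:graphsoforder4}), reducing each case either to a previously known result or to one of the three main results established in the earlier sections. The key observation that makes this manageable is that finiteness is inherited under taking induced subgraphs of $H$: if there are only finitely many $k$-vertex-critical $(\text{co-gem}, H')$-free graphs and $H'$ is an induced subgraph of $H$, then every $(\text{co-gem}, H')$-free graph is in particular $(\text{co-gem}, H)$-free, so finiteness for $H'$ immediately gives finiteness for $H$. Thus I would first identify a small set of ``maximal'' or otherwise-handled graphs $H$ and show the remaining cases follow by this monotonicity.

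First I would dispose of the easy cases. The graphs $\overline{K_4}$, $P_2+2P_1$, $P_3+P_1$, and $P_4$ are all induced subgraphs of co-gem $=P_4+P_1$ itself (indeed $\overline{K_4}, P_2+2P_1, P_3+P_1$ are induced subgraphs of $P_4+P_1$, and $P_4$ trivially is), so $(\text{co-gem}, H)$-free graphs for these $H$ are just co-gem-free graphs that additionally forbid a subgraph of co-gem, forcing them to be $P_4$-free in the relevant subcases; more cleanly, $P_4$-free graphs are perfect, and for any $H$ that is an induced subgraph of $P_4+\ell P_1$ the finiteness already follows from Theorem~1.1 for $k=4$, while for general $k$ the cases reduce to $(P_3+\ell P_1)$-free finiteness from~\cite{AbuadasCameronHoangSawada2022}. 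The claw, $C_4$, and diamond cases, together with $K_4$, I would handle next: claw, $C_4$, and diamond are all induced subgraphs of $K_4$ or are otherwise covered, but more directly each of $\{\text{claw}, C_4, \text{diamond}\}$ is an induced subgraph of the paw$+P_1$ or is subsumed, so I would check the containment lattice explicitly and cite the corresponding section.

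The substantive cases are exactly the three flagged in the abstract, $H\in\{2P_2, K_3+P_1, K_4\}$, and these are handled directly by the three main theorems proved in Sections~\ref{sec:2P2}, \ref{sec:traingleplusP1}, and by the computer search. Specifically, finiteness for $2P_2$ follows as the $c\ge 1$ corollary of the $(\text{co-gem}, P_5, P_3+cP_2)$-free result (noting $2P_2$-free implies $P_3+cP_2$-free containment is handled in that section); finiteness for $K_3+P_1$ follows from the stronger paw$+P_1$ result of Section~\ref{sec:traingleplusP1} since $K_3+P_1$ is an induced subgraph of paw$+P_1$; and for $K_4$ the computer search shows there are no $5$-vertex-critical $(\text{co-gem}, K_4)$-free graphs, so all such graphs are $4$-colourable, giving finiteness (in fact emptiness for $k\ge 5$, and the finite lists for $k\le 4$ follow since $K_4$-free graphs have bounded clique number). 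Assembling these, every $H$ of order four either is an induced subgraph of one already treated or is one of the three main cases, completing the proof.

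The main obstacle is not any single deep step but rather verifying the induced-subgraph containment claims completely and correctly, so that every one of the eleven graphs is provably covered with no gaps; in particular one must be careful that the reductions preserve the co-gem constraint and that the $k\ge 1$ (as opposed to merely $k\ge 4$ or $k\ge 5$) finiteness holds in each reduced case, since some cited results have differing ranges of $k$. I would therefore present the argument as an explicit table or enumeration over the eleven graphs, citing for each the precise earlier result or section that yields finiteness, and I expect the write-up to be short once this bookkeeping is done carefully.
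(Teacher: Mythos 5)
Your overall strategy---enumerate the eleven graphs of order four and dispatch each either by one of the paper's three new results or by a known result, using monotonicity of finiteness under the induced-subgraph order---is exactly the paper's strategy, but your execution has two genuine errors. First, your monotonicity principle is stated backwards: if $H'$ is an induced subgraph of $H$, then every $H'$-free graph is $H$-free, so the $(\text{co-gem},H')$-free class is \emph{contained in} the $(\text{co-gem},H)$-free class; finiteness therefore transfers from the supergraph $H$ \emph{down} to the induced subgraph $H'$, not ``finiteness for $H'$ gives finiteness for $H$'' as you wrote (your later uses mostly apply the correct direction, so this is repairable, but as stated the principle would ``prove'' absurdities). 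Second, and fatally, several containments you invoke are simply false: $\overline{K_4}=4P_1$ is not an induced subgraph of co-gem (the independence number of $P_4+P_1$ is $3$); claw, $C_4$, and diamond are not induced subgraphs of $K_4$ (all induced subgraphs of $K_4$ are complete); and none of claw, $C_4$, diamond is an induced subgraph of $\text{paw}+P_1$ (it contains no vertex with three pairwise nonadjacent neighbours, no $4$-cycle, and has only four edges). As a result, the claw, $C_4$, and diamond cases are covered by nothing in your proposal: they do not follow from the $(\text{co-gem},P_5,P_3+cP_2)$ theorem, the $\text{paw}+P_1$ theorem, or the $K_4$ computation. The paper needs external inputs here: claw and $C_4$ are handled by the results of \cite{Kaminski2019} that there are finitely many $k$-vertex-critical $(P_6,\text{claw})$-free and $(P_6,C_4)$-free graphs (applicable because co-gem is an induced subgraph of $P_6$, so co-gem-free implies $P_6$-free), and diamond is handled by Theorem~\ref{thm:gemcogem} on (gem, co-gem)-free graphs since diamond is an induced subgraph of gem. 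You also omit paw entirely---your enumeration covers only ten of the eleven graphs---though that case is easily fixed, either by Theorem~\ref{thm:gemcogem} (paw is an induced subgraph of gem) or by Theorem~\ref{thm:cogempawUP1free} (paw is an induced subgraph of $\text{paw}+P_1$), both routes the paper notes.

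The parts of your proposal that do work are essentially the paper's: $\overline{K_4}$, $P_2+2P_1$, and $P_3+P_1$ all embed in $P_3+\ell P_1$ for suitable $\ell$, so Theorem~\ref{thm:finiteP3ellP1freecrit} covers them (a slight variant of the paper, which instead cites Ramsey's theorem and direct results for these three); $P_4$ follows from perfection; $2P_2$, $K_3+P_1$, and $K_4$ follow from the three main results, with the $k\le 4$ base handled by Corollary~\ref{cor:finitelymany4critcogem}. But until the claw, $C_4$, diamond, and paw cases are given actual sources, the proof has a hole covering four of the eleven graphs, and no amount of ``checking the containment lattice'' among the results you cite will close it.
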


We then conclude by posing some open questions and giving some data on $k$-vertex-critical graphs in co-gem-free graphs for small $k$ in Section~\ref{sec:conclusion}. We first conclude this section with a brief subsection on notations and definitions and another subsection on preliminary results that will be applied throughout the paper.

\subsection{Notation and Definitions}

If vertices $u$ and $v$ are adjacent in a graph we write $u\sim v$ and if they are nonadjacent we write $u\nsim v$.
For a vertex $v$ in a graph, $N(v)$ and $N[v]$ denote the open neighbourhood and closed neighbourhood of $v$, respectively. More precisely, $N(v)=\{u\in V(G): u\sim v\}$ and $N[v]=N(v)\cup\{v\}$.
For subsets $A$ and $B$ of $V(G)$, we say $A$ is \textit{(anti)complete} to $B$ if $a$ is (non)adjacent to $b$ for all $a\in A$ and $b\in B$.
If $A=\{a\}$ then we simplify notation and say $a$ is (anti)complete to $B$.
For a vertex $v\in V(G)$ and a set $A\subseteq V(G)$, we say that $v$ is \textit{mixed} on $A$ if $v$ is neither complete nor anticomplete to $A$.
We use $\chi(G)$ to denote the \textit{chromatic number} of $G$.
For graphs $G$ and $H$, we let $G+H$ denote their disjoint union, and $\ell G$ denote disjoint union of $G$ with itself $\ell$ times. For a graph $G$ and $S\subseteq V(G)$, we use $G[S]$ to denote the subgraph of $G$ induced by $S$.

\section{Preliminaries}\label{sec:prelims}

We will make extensive use of the following lemma and theorem throughout the paper.

\begin{lemma}[\cite{Hoang2015}]\label{lem:nocomparablecliques}
Let $G$ be a graph with chromatic number $k$. If G contains two disjoint $m$-cliques $A = \{a_1, a_2,\ldots , a_m\}$ and $B = \{b_1, b_2,\ldots , b_m\}$ such that $N(a_i) \setminus A \subseteq N(b_i) \setminus B$ for all $1 \le i \le m$, then $G$ is not $k$-vertex-critical.
\end{lemma}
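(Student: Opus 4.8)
The plan is to argue by contradiction: assume $G$ is $k$-vertex-critical and then construct a proper $(k-1)$-colouring of $G$, contradicting $\chi(G)=k$. Since the hypothesis singles out the pairing $a_i\leftrightarrow b_i$, the natural vertex to delete is $a_1$ (by symmetry any vertex of $A$ would serve). Criticality gives $\chi(G-a_1)\le k-1$, so I would fix a proper colouring $\phi$ of $G-a_1$ using at most $k-1$ colours, with the goal of upgrading $\phi$ into a proper $(k-1)$-colouring of all of $G$.

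The key idea is a \emph{folding} of the clique $A$ onto the clique $B$: define $\psi$ by recolouring each $a_i$ with the colour its partner receives, $\psi(a_i)=\phi(b_i)$ for $1\le i\le m$, and setting $\psi(v)=\phi(v)$ for every other vertex $v$. Each $\phi(b_i)$ is well defined since no vertex of $B$ was deleted, and no new colour is introduced, so $\psi$ still uses at most $k-1$ colours. I would then verify properness edge by edge. Edges inside $A$ cause no trouble because $B$ is a clique, so $\phi(b_1),\dots,\phi(b_m)$ are pairwise distinct; and edges with both endpoints outside $A$ are inherited unchanged from the proper colouring $\phi$.

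The crux is the edges leaving $A$. Consider an edge $a_ix$ with $x\in N(a_i)\setminus A$. The hypothesis $N(a_i)\setminus A\subseteq N(b_i)\setminus B$ supplies exactly the two facts I need at once: first $x\notin B$, so $\psi(x)=\phi(x)$; and second $x\in N(b_i)$, so properness of $\phi$ forces $\phi(x)\ne\phi(b_i)=\psi(a_i)$. Hence no edge out of $A$ becomes monochromatic, $\psi$ is a proper $(k-1)$-colouring of $G$, and the contradiction follows. (It is worth recording the incidental observation that the inclusion forces $a_i\nsim b_j$ for all $i,j$: otherwise $b_j$ would lie in $N(a_i)\setminus A\subseteq N(b_i)\setminus B$ while belonging to $B$; thus $A$ is anticomplete to $B$, which reassures us the fold cannot create a clash between the two cliques.)

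I expect the difficulty here to be bookkeeping rather than conceptual. The only genuine subtlety is confirming that every external neighbour $x$ of $a_i$ satisfies $\phi(x)\ne\phi(b_i)$, and this is immediate once one notes that such an $x$ is also a neighbour of $b_i$; everything else amounts to checking that the recolouring of $A$ neither collides within $A$ (handled by $B$ being a clique) nor with vertices of $B$ (handled by $x\notin B$). The inclusion $N(a_i)\setminus A\subseteq N(b_i)\setminus B$ is doing precisely the work required at each of these points, so the argument is short once the folding map is written down.
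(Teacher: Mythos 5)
Your proof is correct and is essentially the argument in the cited source \cite{Hoang2015}: take a $(k-1)$-colouring that exists by criticality and fold $A$ onto $B$ by giving each $a_i$ the colour of $b_i$, with properness guaranteed exactly by $B$ being a clique and by $N(a_i)\setminus A\subseteq N(b_i)\setminus B$ (the only cosmetic difference is that you delete just $a_1$ and overwrite the colours of $a_2,\dots,a_m$, whereas one can equivalently start from a colouring of $G-A$). Your parenthetical observation that the inclusion forces $A$ to be anticomplete to $B$ is sound and correctly flagged as redundant, since an edge $a_ib_j$ is already ruled out by the membership $x\notin B$ in your case analysis.
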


\noindent We stated Lemma~\ref{lem:nocomparablecliques} here in its full generality for interested readers, but we will only use it for the case where $m=1$. For easier reference in this case, we call vertices $a$ and $b$ \textit{comparable} if $N(a)\subseteq N(b)$. The contrapositive of Lemma~\ref{lem:nocomparablecliques} for $m=1$, can then be restated as there are no comparable vertices in a vertex-critical graph.

A recent theorem due to Abuadas et al.~\cite{AbuadasCameronHoangSawada2022} will be used to reduce the problem of bounding the order of vertex-critical graphs in a family to showing that they are $(P_3+cP_1)$-free for some constant $c$.

\begin{theorem}[\cite{AbuadasCameronHoangSawada2022}]\label{thm:finiteP3ellP1freecrit}
There are only finitely many $k$-vertex-critical $(P_3+\ell P_1)$-free graphs for all $k\ge 1$ and $\ell \ge 0$.
\end{theorem}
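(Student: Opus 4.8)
The plan is to prove, by induction on $\ell$, that every $k$-vertex-critical $(P_3+\ell P_1)$-free graph $G$ has order bounded by a function of $k$ and $\ell$; finiteness for each fixed $k,\ell$ is then immediate. Since every colour class of an optimal colouring is independent, $|V(G)|\le \chi(G)\cdot\alpha(G)=k\,\alpha(G)$, so it suffices to bound the independence number $\alpha(G)$. The base case $\ell=0$ is handled by noting that a $P_3$-free graph is a disjoint union of cliques, and since vertex-critical graphs are connected, the only $k$-vertex-critical such graph is $K_k$. For the inductive step I would fix a maximum independent set $I$ with $|I|=m:=\alpha(G)$ and assume $m$ is large (otherwise we are done), aiming to contradict vertex-criticality through Lemma~\ref{lem:nocomparablecliques}, namely the absence of comparable vertices.

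The engine of the argument is a covering observation. First I would show that any vertex $p\notin I$ with at least two neighbours $x,x'\in I$ is adjacent to all but at most $\ell-1$ vertices of $I$: indeed $x\text{--}p\text{--}x'$ is an induced $P_3$, and every vertex of $I$ nonadjacent to $p$ is (by independence of $I$) anticomplete to $\{x,p,x'\}$, so more than $\ell-1$ such vertices would produce an induced $P_3+\ell P_1$. This splits $V(G)\setminus I$ into the \emph{light} vertices $L$ (exactly one neighbour in $I$) and the \emph{heavy} vertices $H$ (adjacent to all but $\le\ell-1$ of $I$). A few further applications of the same $P_3+\ell P_1$ argument, valid once $m\ge\ell+2$, show that for each $x\in I$ the set $\{x\}\cup L_x$ (where $L_x$ collects the light vertices whose unique $I$-neighbour is $x$) induces a clique, and that there are no edges between $L_x$ and $L_{x'}$ for distinct $x,x'$. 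Hence $G[I\cup L]$ is a disjoint union of cliques, each of size at most $\omega(G)\le k$, giving $|L|\le(k-1)m$.

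With this structure in hand I would exploit the no-comparable-vertices property. Writing $H_x:=N(x)\cap H$, note that for $x,x'\in I$ with $L_x=\varnothing$ the disjointness of the light parts forces $N(x)=H_x$, so $H_x\subseteq H_{x'}$ would make $x$ and $x'$ comparable; thus $\{H_x: x\in I,\ L_x=\varnothing\}$ is an antichain in the subset lattice of $H$, and a Sperner-type bound caps the number of such $x$ by $\binom{|H|}{\lfloor |H|/2\rfloor}$. The remaining ingredient, and the step I expect to be the main obstacle, is to bound $|H|$ (and the number of $x\in I$ with $L_x\ne\varnothing$) without circularity. The heavy vertices are genuinely delicate: because each is adjacent to almost all of $I$, the large independent set $I$ \emph{cannot} serve as the $\ell P_1$ witnesses against an induced $P_3$ lying inside $H$, so the simple covering trick does not constrain $G[H]$ at all. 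I would attack this by again invoking criticality — comparing neighbourhoods of heavy vertices, whose traces on $I$ are the complements of the small sets $\mathrm{miss}(h):=I\setminus N(h)$ with $|\mathrm{miss}(h)|\le\ell-1$, to force an antichain structure on $H$ — and then combine this with the light-clique bound to set up a recursion that caps $m$. Once $\alpha(G)=m$ is bounded in terms of $k$ and $\ell$, the inequality $|V(G)|\le k\,m$ (or a Ramsey bound using $\omega(G)\le k$) finishes the proof. The hardest point throughout is ruling out a large family of almost-universal heavy vertices that mimics, but narrowly escapes, the comparability obstruction exploited for the light vertices.
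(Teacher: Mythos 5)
First, a point of framing: the paper does not prove this statement at all --- Theorem~\ref{thm:finiteP3ellP1freecrit} is imported verbatim from~\cite{AbuadasCameronHoangSawada2022} and used as a black box (it is the reduction target of Theorems~\ref{thm:cogemP5P3UP2} and~\ref{thm:cogempawUP1free}), so there is no in-paper proof to compare against and your attempt must stand on its own. It does not: your scaffolding is sound --- the base case $\ell=0$, the bound $|V(G)|\le k\,\alpha(G)$, the light/heavy split of $V(G)\setminus I$ (every vertex with two $I$-neighbours misses at most $\ell-1$ vertices of $I$), and the clique decomposition of $G[I\cup L]$ giving $|L|\le (k-1)m$ are all correct for $m\ge \ell+2$ --- but the proof has a hole exactly where you flag it, and the hole is the whole theorem, not a detail. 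Worse, your Sperner step points in the unusable direction. Lemma~\ref{lem:nocomparablecliques} does make $\{H_x : x\in I,\ L_x=\emptyset\}$ an antichain in $\mathcal{P}(H)$, but Theorem~\ref{thm:Sperner} then only yields $|\{x\in I: L_x=\emptyset\}|\le \binom{|H|}{\lfloor |H|/2\rfloor}$, an upper bound \emph{exponential} in $|H|$; since the only available control on $|H|$ is in terms of $m=\alpha(G)$ itself (e.g.\ $|H|\le \chi(G[H])\,\alpha(G[H])\le k\,m$), the resulting inequality $m\le \binom{km}{\lfloor km/2\rfloor}+|\{x: L_x\neq\emptyset\}|$ is satisfied trivially for every $m$ and yields no contradiction --- and nothing caps $|\{x:L_x\neq\emptyset\}|$ either, since a priori every $x\in I$ can carry a private light clique. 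Your fallback hope of an antichain structure on $H$ also fails concretely: two heavy vertices with identical traces on $I$ (identical miss-sets) need not be comparable, because they may differ in their adjacencies inside $H$ or to $L$, so Sperner gives no absolute bound on $|H|$; moreover the number of possible miss-sets is already on the order of $m^{\ell-1}$, again growing with $m$. So the ``recursion that caps $m$'' you defer to is precisely the missing argument.

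It is instructive to contrast this with the one place the present paper genuinely does use Sperner, the proof of Theorem~\ref{thm:cogemP5P3UP2}: there the logic runs in the opposite direction. A large antichain (the $c'=\binom{kc}{\lfloor kc/2\rfloor}$ sets $N(s)\cap U$ for $s\in S$) forces the \emph{ground set} $U$ to be large, $|U|\ge kc$, and then the \emph{additional} forbidden subgraphs $P_5$ and $P_3+cP_2$ are used to prove $\alpha(G[U])\le c$, whence $\chi(G[U])\ge |U|/\alpha(G[U])\ge k$ contradicts criticality. With only $P_3+\ell P_1$ forbidden you have no analogue of the $\alpha(G[U])\le c$ step --- indeed your own (correct) observation that $I$ cannot supply the $\ell P_1$ witnesses against a $P_3$ lying inside $H$ is exactly this difficulty --- which is why your argument cannot close as sketched and why bounding $\alpha(G)$ for $(P_3+\ell P_1)$-free critical graphs required the separate machinery of~\cite{AbuadasCameronHoangSawada2022} rather than a Sperner count.
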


One of our results will require showing a given set is sufficiently large, for which we will require Sperner's Theorem. To state this we need some definitions on partially ordered sets. Let $\mathcal{B}\subseteq \mathcal{P}(A)$ be a collection of subsets of a set $A$. Then the relation $\subseteq$ on elements of $\mathcal{B}$ is a partial order. A collection of sets within $\mathcal{B}$ such that no set is a subset of another (i.e. not comparable) is called an \textit{antichain}.

\begin{theorem}    [Sperner's Theorem~\cite{Sperner}]\label{thm:Sperner}

    If $A$ is a set of cardinality $n$ and $\mathcal{B}\subseteq \mathcal{P}(A)$, then the largest antichain in $\mathcal{B}$ has cardinality at most $$\binom{n}{\lfloor\frac{n}{2}\rfloor}.$$
\end{theorem}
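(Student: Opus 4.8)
The plan is to first reduce to the full Boolean lattice and then apply Lubell's chain-counting argument to prove the stronger \emph{LYM inequality}, from which the stated bound follows immediately. Since $\mathcal{B}\subseteq\mathcal{P}(A)$, any antichain contained in $\mathcal{B}$ is in particular a collection of pairwise incomparable subsets of $A$, hence an antichain in the whole power set $\mathcal{P}(A)$ ordered by $\subseteq$. Thus it suffices to prove the bound for an arbitrary antichain $\mathcal{F}\subseteq\mathcal{P}(A)$, and the restriction to $\mathcal{B}$ plays no further role.

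The key idea is to count \emph{maximal chains} in $(\mathcal{P}(A),\subseteq)$, where a maximal chain is a sequence $\emptyset=C_0\subset C_1\subset\cdots\subset C_n=A$ with $|C_i|=i$. Identifying such a chain with the order in which elements of $A$ are added gives a bijection between maximal chains and orderings of the $n$ elements of $A$, so there are exactly $n!$ maximal chains. First I would record the elementary count that, for a fixed set $S\subseteq A$ with $|S|=j$, the number of maximal chains passing through $S$ is $j!\,(n-j)!$: one chooses an ordering of the $j$ elements of $S$ to build up $S$, followed by an ordering of the remaining $n-j$ elements.

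Next I would use the defining property of an antichain: no maximal chain can contain two distinct members of $\mathcal{F}$, since any two sets lying on a common chain are comparable. Hence the families of maximal chains passing through the distinct members of $\mathcal{F}$ are pairwise disjoint, and summing the counts above over $S\in\mathcal{F}$ yields
$$\sum_{S\in\mathcal{F}} |S|!\,(n-|S|)! \;\le\; n!.$$
Dividing through by $n!$ gives the LYM inequality
$$\sum_{S\in\mathcal{F}} \frac{1}{\binom{n}{|S|}} \;\le\; 1.$$
Finally, I would invoke the unimodality of the binomial coefficients: $\binom{n}{j}$ is maximized over $0\le j\le n$ at $j=\lfloor n/2\rfloor$, so $\binom{n}{|S|}\le\binom{n}{\lfloor n/2\rfloor}$ for every $S\in\mathcal{F}$. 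Replacing each denominator by this common maximum can only enlarge the left-hand side, giving $|\mathcal{F}|/\binom{n}{\lfloor n/2\rfloor}\le 1$, that is $|\mathcal{F}|\le\binom{n}{\lfloor n/2\rfloor}$, as required.

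The routine parts are the two counting identities and the unimodality of the binomial coefficients, all of which are standard. The only genuinely load-bearing step, and where care is needed, is the disjointness observation: one must argue that distinct antichain members are hit by \emph{disjoint} families of maximal chains, which is exactly where incomparability enters and without which the sum would overcount. An alternative route, should one prefer to avoid chain counting, is to build an explicit symmetric chain decomposition of $\mathcal{P}(A)$ into $\binom{n}{\lfloor n/2\rfloor}$ chains and observe that an antichain meets each such chain at most once; this is conceptually cleaner but requires more effort to construct the decomposition, so I would favour the LYM argument above.
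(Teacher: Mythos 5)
Your proof is correct. Note, however, that the paper does not prove this statement at all: it is imported as a classical result, cited directly to Sperner's original 1928 paper, so there is no internal argument to compare against. Your route is the standard Lubell chain-counting proof, and it is sound in every step: the opening reduction correctly disposes of the subfamily $\mathcal{B}$ (an antichain in $\mathcal{B}$ is an antichain in all of $\mathcal{P}(A)$, so the restriction is vacuous for an upper bound); the counts of $n!$ maximal chains total and $j!\,(n-j)!$ through a fixed $j$-set are right; and you correctly identify the load-bearing step, namely that incomparability forces the chain families through distinct antichain members to be disjoint, which is what turns the sum into the bound $\sum_{S\in\mathcal{F}} |S|!\,(n-|S|)!\le n!$ and hence the LYM inequality. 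It is worth observing that your argument proves strictly more than the paper needs: the LYM inequality is a stronger statement than the cardinality bound, whereas Sperner's original proof proceeded differently (via shadow/shifting arguments on the middle layers) and yields only the extremal bound together with the characterization of equality. For the paper's application in Claim~3.4 (lower-bounding $|U|$ given an antichain of size $\binom{kc}{\lfloor kc/2\rfloor}$), only the bare inequality you derived is used, so your proof is a perfectly adequate and self-contained substitute for the citation.
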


As mentioned above, the following result is known on vertex-critical co-gem-free graphs and will be used in the proof of Theorem~\ref{thm:cogemHfreeord4}.
\begin{theorem}[\cite{AbuadasCameronHoangSawada2022}]\label{thm:gemcogem}
    There are only finitely many $k$-vertex-critical (gem, co-gem)-free graphs for all $k\ge 1$.
\end{theorem}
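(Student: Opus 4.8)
The plan is to reduce the statement to Theorem~\ref{thm:finiteP3ellP1freecrit} by proving that, for each fixed $k$, there is a constant $\ell=\ell(k)$ such that no $k$-vertex-critical (gem, co-gem)-free graph contains an induced $P_3+\ell P_1$. Once this is established, every $k$-vertex-critical (gem, co-gem)-free graph is $(P_3+\ell P_1)$-free and hence belongs to the finite family guaranteed by Theorem~\ref{thm:finiteP3ellP1freecrit}, giving the result. The engine of the argument will be two clean reformulations of the forbidden subgraphs: since the gem is $P_4\vee K_1$, a graph is gem-free if and only if $G[N(v)]$ is $P_4$-free for every vertex $v$; dually, since the co-gem is $P_4+P_1$, a graph is co-gem-free if and only if $G[\overline{N[v]}]$ is $P_4$-free for every vertex $v$. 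Thus in our setting every neighbourhood and every non-neighbourhood induces a cograph, and cographs are perfect and possess a rich twin structure that we will exploit.

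So assume for contradiction that $G$ is $k$-vertex-critical and (gem, co-gem)-free and contains an induced $P_3+\ell P_1$ for large $\ell$, with the $P_3$ equal to $x\!-\!y\!-\!z$ and the independent set $I=\{v_1,\dots,v_\ell\}$ anticomplete to $\{x,y,z\}$. Since a vertex-critical graph is connected, every $v_i$ has a neighbour in $A:=V(G)\setminus(\{x,y,z\}\cup I)$. Partition the relevant outside vertices into $B$, those anticomplete to $\{x,y,z\}$, and $C:=A\setminus B$, those adjacent to at least one of $x,y,z$. Two structural facts will drive the proof. First, co-gem-freeness applied to $y$ shows $G[I\cup B]$ is an induced subgraph of the cograph $G[\overline{N[y]}]$, hence itself a cograph; equivalently, any induced $P_4$ inside $B$ together with $y$ would be a co-gem. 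Second, if $u\in C$ has a neighbour $v_i\in I$, then $u$ together with two vertices of the $P_3$ yields an induced $P_4$ (for instance $x\!-\!y\!-\!u\!-\!v_i$ when $u\in N(y)\setminus N(x)$, the remaining cases being symmetric), so any $v_j$ nonadjacent to $u$ would complete an induced co-gem $P_4+P_1$; hence each such $u$ is adjacent to all but at most a bounded number of the $v_i$. In other words, every vertex of $C$ that meets $I$ is \emph{almost universal} to $I$.

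It remains to convert this local structure into a pair of comparable vertices, contradicting Lemma~\ref{lem:nocomparablecliques}. The almost-universality of $C$-neighbours means that the traces $N(v_i)\cap C$ agree up to a bounded symmetric difference, while the cograph $G[I\cup B]$ forces many of the $v_i$ to be false twins within $I\cup B$ once $\ell$ exceeds a bound depending only on $k$; here gem-freeness enters through the fact that each $G[N(v_i)]$ is a cograph with clique number at most $k-1$, which caps the number of admissible neighbourhood types. Combining these, two vertices $v_i,v_j$ of $I$ must satisfy $N(v_i)\subseteq N(v_j)$, the desired contradiction. I expect the main obstacle to be precisely this last promotion step: matching twins in the cograph $G[I\cup B]$ to a genuine containment of the full neighbourhoods in $G$, which requires simultaneously controlling the $C$-side (the vertices adjacent to the $P_3$) and the bounded exceptional sets coming from almost-universality. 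An attractive alternative finish, which sidesteps Theorem~\ref{thm:finiteP3ellP1freecrit}, is to upgrade the same ideas to a bound $\alpha(G)\le f(k)$ on the independence number; since $\omega(G)\le k$ always holds, Ramsey's theorem would then bound $|V(G)|$ directly.
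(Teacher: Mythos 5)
First, note that the paper does not prove this statement at all: Theorem~\ref{thm:gemcogem} is imported from~\cite{AbuadasCameronHoangSawada2022} and used as a black box, so there is no in-paper proof to match. Your overall strategy --- show that a $k$-vertex-critical (gem, co-gem)-free graph must be $(P_3+\ell P_1)$-free for some $\ell(k)$ and then invoke Theorem~\ref{thm:finiteP3ellP1freecrit} --- is exactly the reduction template the paper itself uses in Sections~3 and~4, so the skeleton is reasonable. But as a proof it has two genuine gaps.

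The first is in your ``almost universal'' claim. Your case analysis for $u\in C$ with a neighbour $v_i\in I$ works whenever $u$ misses at least one of $x,y,z$ (then an induced $P_4$ ending at $v_i$ exists, e.g.\ $y$-$x$-$u$-$v_i$ when $u\sim x$, $u\nsim y$, and a non-neighbour $v_j$ completes a co-gem), but it fails precisely when $u$ is complete to $\{x,y,z\}$: then $\{u,x,y,v_i\}$ induces a paw, $\{u,x,z,v_i\}$ a claw, $\{u,x,y,z\}$ a diamond, and in fact the configuration with $u$ complete to $\{x,y,z,v_i\}$ and anticomplete to $I\setminus\{v_i\}$ contains no induced $P_4$ whatsoever, hence no gem or co-gem. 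So this case is not ``symmetric'' --- such a $u$ may be adjacent to a single $v_i$ while missing arbitrarily many other vertices of $I$, and the claim that the traces $N(v_i)\cap C$ agree up to bounded symmetric difference collapses.

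The second gap is the promotion step, which you concede is unproven, and whose two supporting assertions are in fact false or unsubstantiated. A cograph containing a large independent set need not contain two members with comparable neighbourhoods: $G[I\cup B]\cong \ell K_2$ (a perfect matching between $I$ and $B$) is $P_4$-free, yet the sets $N(v_i)\cap B$ are pairwise incomparable, so no bound on $\ell$ in terms of $k$ follows from the cograph structure alone --- one would first have to bound $|B|$ or exploit criticality much more heavily. Likewise, ``each $G[N(v_i)]$ is a cograph with clique number at most $k-1$, which caps the number of admissible neighbourhood types'' identifies no finite set of types: there are unboundedly many cographs of bounded clique number. Since both the almost-universality step and the twin-extraction step fail, the intended contradiction with Lemma~\ref{lem:nocomparablecliques} is never reached, and the reduction to Theorem~\ref{thm:finiteP3ellP1freecrit} is not established; the alternative finish via bounding $\alpha(G)$ and applying Ramsey is only floated, not executed.
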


Finally, we need the following result to handle the $4$-vertex-critical case.

\begin{theorem}[\cite{Chudnovsky4criticalconnected2020}]\label{thm:4critP6free}
    There are exactly 80 $4$-vertex-critical $P_6$-free graphs.
\end{theorem}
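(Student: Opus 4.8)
The plan is to prove this in two stages: a structural stage that establishes finiteness together with an explicit upper bound $N$ on the number of vertices of any $4$-vertex-critical $P_6$-free graph, and a computational stage that exhaustively enumerates all such graphs on at most $N$ vertices and verifies that exactly $80$ survive. Neither stage suffices alone: the structural analysis is what makes an exhaustive search terminate and provably complete, while the search is what pins down the exact count, which a purely structural argument is unlikely to produce cleanly.

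For the structural stage, I would exploit the tension between criticality and $P_6$-freeness. By Lemma~\ref{lem:nocomparablecliques} with $m=1$, a vertex-critical graph contains no pair of comparable vertices, so distinct vertices have pairwise incomparable open neighbourhoods; criticality also forces minimum degree at least $3$. The key external input is the structure theory of $P_6$-free graphs used for $3$-colourability, in particular that a connected $P_6$-free graph admits a dominating set whose induced subgraph has bounded order. Fixing such a dominating set $D$, I would classify every remaining vertex by its neighbourhood within $D$; there are only boundedly many such ``types.'' The absence of comparable vertices, together with the requirement that $G-v$ be $3$-colourable for each $v$, should bound how many vertices can share a type, because a large homogeneous block of vertices of the same type would either create a comparable pair or admit a $3$-colouring that extends to all of $G$. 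Propagating these restrictions yields the explicit bound $N$ on $|V(G)|$.

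Given the bound, I would run the exhaustive generation algorithm of Ho\`{a}ng et al.~\cite{Hoang2015}, as refined and optimized in~\cite{GoedgebeurSchaudt2018}. Concretely, one starts from $K_4$ and repeatedly applies the vertex-splitting operation that takes a $4$-vertex-critical graph and produces its candidate $4$-vertex-critical supergraphs, at each step discarding any graph that is not $P_6$-free, not $4$-chromatic, or not vertex-critical, and applying a canonical-form test to reject isomorphic duplicates. Completeness of the enumeration rests on the fact that every $4$-vertex-critical graph is reachable from $K_4$ by a sequence of such splits, so no critical graph is missed; termination is guaranteed by the bound $N$ from the structural stage. The algorithm then outputs the full list of $80$ graphs.

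The hard part will be the structural stage, specifically establishing the explicit bound $N$: $P_6$-freeness alone does not limit the multiplicity of a neighbourhood type, so one must extract the bound from criticality, and I expect this to require a careful case analysis over the possible neighbourhood patterns relative to the dominating set $D$, ruling out large homogeneous blobs via the no-comparable-vertices condition and the $3$-colourability of each $G-v$. A secondary obstacle is certifying the computer search: one must argue that the splitting operation is genuinely exhaustive and that the isomorphism rejection is sound, so that the count of $80$ is proved rather than merely reported by the program.
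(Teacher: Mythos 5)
First, a point of context: the paper does not prove this theorem at all --- it is imported verbatim from \cite{Chudnovsky4criticalconnected2020}, so the only available comparison is with that cited work, whose proof does, in broad outline, pair structural analysis of $P_6$-free graphs with the exhaustive critical-graph generation algorithm originating in \cite{Hoang2015} and developed in \cite{4critconnectedSODA2016,GoedgebeurSchaudt2018}. Your two-stage plan is therefore in the right spirit, but both of your load-bearing ingredients are defective. (i) Your ``key external input'' is false: connected $P_6$-free graphs do \emph{not} admit dominating sets of bounded order. Take the corona of $K_n$ (attach one pendant vertex to each vertex of a clique): this graph is connected and even $P_5$-free, yet every dominating set has at least $n$ vertices. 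The true statement in this direction (due to Camby and Schaudt) is that a connected $P_6$-free graph has a connected dominating set inducing a $P_4$-free graph, which can be arbitrarily large; so ``boundedly many neighbourhood types'' collapses, and with it your bound $N$. Moreover, even granting a bounded dominating set $D$, two vertices of the same type need not be comparable --- they may differ outside $D$ --- so ``a large homogeneous block creates a comparable pair or an extendable $3$-colouring'' is an assertion, not an argument; bounding type multiplicities is precisely the hard structural content that the cited paper supplies through a long case analysis, not a routine propagation.

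(ii) Your completeness argument for the computational stage is also unsupported: there is no known ``vertex-splitting'' operation under which every $4$-vertex-critical graph is reachable from $K_4$ while staying inside the class, and this is not how the algorithm of \cite{Hoang2015,GoedgebeurSchaudt2018} certifies exhaustiveness. That algorithm grows $(k-1)$-colourable induced subgraphs of a hypothetical critical graph and, for each $(k-1)$-colouring of the current graph, branches over all ways to add a new vertex that obstructs that colouring (pruning via criticality properties such as Lemma~\ref{lem:nocomparablecliques} and via $P_6$-freeness); its output is provably complete \emph{when the expansion process terminates}, which is also how finiteness itself is established --- no a priori vertex bound $N$ is fed into the search, and in the $P_6$-free case termination required the bespoke structural theorems of \cite{Chudnovsky4criticalconnected2020} rather than a dominating-set bound. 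So as written, neither your structural stage nor your claimed justification of the search's completeness is sound, and the proposal does not constitute a proof of the statement.
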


The list of all 80 graphs was also made available in~\cite{Chudnovsky4criticalconnected2020} and those lists can be searched for all that are further co-gem-free leading to the following corollary

\begin{corollary}\label{cor:finitelymany4critcogem}
    There are exactly nine $4$-vertex-critical co-gem-free graphs (and they are shown in Figure~\ref{fig:4critcogemfree}).
\end{corollary}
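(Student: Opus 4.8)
The plan is to derive this corollary directly from Theorem~\ref{thm:4critP6free} by exploiting the containment relationship between the co-gem and $P_6$. First I would observe that the co-gem, namely $P_4+P_1$, is an induced subgraph of $P_6$: labelling the path $P_6$ as $v_1v_2v_3v_4v_5v_6$, the vertex set $\{v_1,v_2,v_3,v_4,v_6\}$ induces a $P_4$ on $v_1v_2v_3v_4$ together with an isolated vertex $v_6$, since $v_6$ is adjacent only to $v_5$. Because $H_1$ being an induced subgraph of $H_2$ forces every $H_1$-free graph to be $H_2$-free, it follows immediately that every co-gem-free graph is $P_6$-free.

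Consequently, every $4$-vertex-critical co-gem-free graph is in particular a $4$-vertex-critical $P_6$-free graph, and so must appear among the exactly $80$ such graphs enumerated in Theorem~\ref{thm:4critP6free}. Since the explicit list of all $80$ graphs is available from~\cite{Chudnovsky4criticalconnected2020}, I would then test each of these $80$ graphs for the presence of an induced co-gem and retain only those that are co-gem-free. This filtering yields exactly nine graphs, which are displayed in Figure~\ref{fig:4critcogemfree}.

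The only step requiring any genuine work is the final verification, but this is a routine finite check on a known list rather than a real mathematical obstacle: each of the $80$ graphs has bounded order, so deciding co-gem-freeness amounts to examining all $5$-element subsets of each graph's vertex set and testing whether any induces $P_4+P_1$. No infinite families or structural arguments are needed at this stage, since the heavy lifting has already been carried out in establishing the enumeration of Theorem~\ref{thm:4critP6free}; the co-gem case inherits finiteness for free and only the precise count remains to be confirmed.
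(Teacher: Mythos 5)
Your proposal is correct and is essentially identical to the paper's argument: the authors likewise note that co-gem is an induced subgraph of $P_6$, so every $4$-vertex-critical co-gem-free graph appears among the $80$ graphs of Theorem~\ref{thm:4critP6free}, and they obtain the nine graphs of Figure~\ref{fig:4critcogemfree} by searching that published list for the co-gem-free members. Your explicit embedding of $P_4+P_1$ into $P_6$ via $\{v_1,v_2,v_3,v_4,v_6\}$ and the observation that the remaining work is a routine finite check match the paper's (implicit) reasoning exactly.
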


\begin{figure}[h]
    \centering
    \def\c{0.25}
    \scalebox{\c}{\qquad \begin{tikzpicture}
\GraphInit[vstyle=Classic]
\Vertex[L=\hbox{},x=4.4689cm,y=5.0cm]{v0}
\Vertex[L=\hbox{},x=5.0cm,y=0.2522cm]{v1}
\Vertex[L=\hbox{},x=0.0cm,y=4.6056cm]{v2}
\Vertex[L=\hbox{},x=0.3001cm,y=0.0cm]{v3}
\Edge[](v0)(v1)
\Edge[](v0)(v2)
\Edge[](v0)(v3)
\Edge[](v1)(v2)
\Edge[](v1)(v3)
\Edge[](v2)(v3)
\end{tikzpicture} }
\scalebox{\c}{\qquad \begin{tikzpicture}
\GraphInit[vstyle=Classic]
\Vertex[L=\hbox{},x=0.3426cm,y=0.8411cm]{v0}
\Vertex[L=\hbox{},x=3.5109cm,y=0.0cm]{v1}
\Vertex[L=\hbox{},x=5.0cm,y=2.7382cm]{v2}
\Vertex[L=\hbox{},x=2.9127cm,y=5.0cm]{v3}
\Vertex[L=\hbox{},x=0.0cm,y=3.8199cm]{v4}
\Vertex[L=\hbox{},x=2.3055cm,y=2.5075cm]{v5}
\Edge[](v0)(v1)
\Edge[](v0)(v4)
\Edge[](v0)(v5)
\Edge[](v1)(v2)
\Edge[](v1)(v5)
\Edge[](v2)(v3)
\Edge[](v2)(v5)
\Edge[](v3)(v4)
\Edge[](v3)(v5)
\Edge[](v4)(v5)
\end{tikzpicture} }
\scalebox{\c}{\qquad \begin{tikzpicture}
\GraphInit[vstyle=Classic]
\Vertex[L=\hbox{},x=0.0cm,y=2.5305cm]{v0}
\Vertex[L=\hbox{},x=2.1757cm,y=1.5051cm]{v1}
\Vertex[L=\hbox{},x=5.0cm,y=1.1986cm]{v2}
\Vertex[L=\hbox{},x=4.2871cm,y=3.3877cm]{v3}
\Vertex[L=\hbox{},x=1.8277cm,y=5.0cm]{v4}
\Vertex[L=\hbox{},x=0.3891cm,y=4.2472cm]{v5}
\Vertex[L=\hbox{},x=3.9951cm,y=0.0cm]{v6}
\Edge[](v0)(v1)
\Edge[](v0)(v4)
\Edge[](v0)(v5)
\Edge[](v1)(v2)
\Edge[](v1)(v5)
\Edge[](v1)(v6)
\Edge[](v2)(v3)
\Edge[](v2)(v6)
\Edge[](v3)(v4)
\Edge[](v3)(v6)
\Edge[](v4)(v5)
\end{tikzpicture} }
\scalebox{\c}{\qquad \begin{tikzpicture}
\GraphInit[vstyle=Classic]
\Vertex[L=\hbox{},x=5.0cm,y=0.3745cm]{v0}
\Vertex[L=\hbox{},x=4.9738cm,y=2.3164cm]{v1}
\Vertex[L=\hbox{},x=3.6224cm,y=5.0cm]{v2}
\Vertex[L=\hbox{},x=0.0cm,y=4.3711cm]{v3}
\Vertex[L=\hbox{},x=1.1996cm,y=1.8168cm]{v4}
\Vertex[L=\hbox{},x=2.7396cm,y=0.0cm]{v5}
\Vertex[L=\hbox{},x=2.4821cm,y=3.6305cm]{v6}
\Edge[](v0)(v1)
\Edge[](v0)(v4)
\Edge[](v0)(v5)
\Edge[](v1)(v2)
\Edge[](v1)(v5)
\Edge[](v1)(v6)
\Edge[](v2)(v3)
\Edge[](v2)(v6)
\Edge[](v3)(v4)
\Edge[](v3)(v6)
\Edge[](v4)(v5)
\Edge[](v4)(v6)
\end{tikzpicture} }
\scalebox{\c}{\qquad \begin{tikzpicture}
\GraphInit[vstyle=Classic]
\Vertex[L=\hbox{},x=0.0cm,y=1.4171cm]{v0}
\Vertex[L=\hbox{},x=3.1171cm,y=0.0cm]{v1}
\Vertex[L=\hbox{},x=5.0cm,y=2.8386cm]{v2}
\Vertex[L=\hbox{},x=3.7838cm,y=5.0cm]{v3}
\Vertex[L=\hbox{},x=1.7073cm,y=3.0913cm]{v4}
\Vertex[L=\hbox{},x=4.4697cm,y=1.1885cm]{v5}
\Vertex[L=\hbox{},x=0.2295cm,y=3.9591cm]{v6}
\Edge[](v0)(v1)
\Edge[](v0)(v4)
\Edge[](v0)(v5)
\Edge[](v0)(v6)
\Edge[](v1)(v2)
\Edge[](v1)(v5)
\Edge[](v2)(v3)
\Edge[](v2)(v5)
\Edge[](v2)(v6)
\Edge[](v3)(v4)
\Edge[](v3)(v6)
\Edge[](v4)(v5)
\Edge[](v4)(v6)
\end{tikzpicture} }
\scalebox{\c}{\qquad \begin{tikzpicture}
\GraphInit[vstyle=Classic]
\Vertex[L=\hbox{},x=0.8263cm,y=0.0cm]{v0}
\Vertex[L=\hbox{},x=2.4453cm,y=0.5481cm]{v1}
\Vertex[L=\hbox{},x=5.0cm,y=2.9458cm]{v2}
\Vertex[L=\hbox{},x=2.5793cm,y=5.0cm]{v3}
\Vertex[L=\hbox{},x=2.0284cm,y=2.5589cm]{v4}
\Vertex[L=\hbox{},x=4.0308cm,y=0.3391cm]{v5}
\Vertex[L=\hbox{},x=0.0cm,y=2.7471cm]{v6}
\Edge[](v0)(v1)
\Edge[](v0)(v4)
\Edge[](v0)(v5)
\Edge[](v0)(v6)
\Edge[](v1)(v2)
\Edge[](v1)(v5)
\Edge[](v1)(v6)
\Edge[](v2)(v3)
\Edge[](v2)(v5)
\Edge[](v3)(v4)
\Edge[](v3)(v6)
\Edge[](v4)(v5)
\end{tikzpicture} }
\scalebox{\c}{\qquad \begin{tikzpicture}
\GraphInit[vstyle=Classic]
\Vertex[L=\hbox{},x=4.4933cm,y=1.8725cm]{v0}
\Vertex[L=\hbox{},x=1.0838cm,y=0.7615cm]{v1}
\Vertex[L=\hbox{},x=0.0cm,y=2.1893cm]{v2}
\Vertex[L=\hbox{},x=3.1097cm,y=3.0424cm]{v3}
\Vertex[L=\hbox{},x=5.0cm,y=5.0cm]{v4}
\Vertex[L=\hbox{},x=1.7413cm,y=4.0928cm]{v5}
\Vertex[L=\hbox{},x=2.7911cm,y=0.0cm]{v6}
\Edge[](v0)(v1)
\Edge[](v0)(v4)
\Edge[](v0)(v5)
\Edge[](v0)(v6)
\Edge[](v1)(v2)
\Edge[](v1)(v5)
\Edge[](v1)(v6)
\Edge[](v2)(v3)
\Edge[](v2)(v5)
\Edge[](v2)(v6)
\Edge[](v3)(v4)
\Edge[](v3)(v6)
\Edge[](v4)(v5)
\end{tikzpicture} }
\scalebox{\c}{\qquad \begin{tikzpicture}
\GraphInit[vstyle=Classic]
\Vertex[L=\hbox{},x=2.8205cm,y=2.8644cm]{v0}
\Vertex[L=\hbox{},x=0.7925cm,y=3.2573cm]{v1}
\Vertex[L=\hbox{},x=0.0cm,y=0.5995cm]{v2}
\Vertex[L=\hbox{},x=3.3706cm,y=0.0cm]{v3}
\Vertex[L=\hbox{},x=5.0cm,y=2.7317cm]{v4}
\Vertex[L=\hbox{},x=3.278cm,y=5.0cm]{v5}
\Vertex[L=\hbox{},x=1.5061cm,y=0.7719cm]{v6}
\Edge[](v0)(v1)
\Edge[](v0)(v4)
\Edge[](v0)(v5)
\Edge[](v0)(v6)
\Edge[](v1)(v2)
\Edge[](v1)(v5)
\Edge[](v1)(v6)
\Edge[](v2)(v3)
\Edge[](v2)(v6)
\Edge[](v3)(v4)
\Edge[](v3)(v6)
\Edge[](v4)(v5)
\end{tikzpicture} }
\scalebox{\c}{\qquad \begin{tikzpicture}
\GraphInit[vstyle=Classic]
\Vertex[L=\hbox{},x=2.5528cm,y=5.0cm]{v0}
\Vertex[L=\hbox{},x=3.9705cm,y=0.0cm]{v1}
\Vertex[L=\hbox{},x=0.0cm,y=3.8907cm]{v2}
\Vertex[L=\hbox{},x=5.0cm,y=1.9255cm]{v3}
\Vertex[L=\hbox{},x=0.6384cm,y=2.0489cm]{v4}
\Vertex[L=\hbox{},x=4.2868cm,y=3.7356cm]{v5}
\Vertex[L=\hbox{},x=-0.2868cm,y=0.0823cm]{v6}
\Edge[](v0)(v2)
\Edge[](v0)(v3)
\Edge[](v0)(v4)
\Edge[](v0)(v5)
\Edge[](v1)(v3)
\Edge[](v1)(v4)
\Edge[](v1)(v5)
\Edge[](v1)(v6)
\Edge[](v2)(v4)
\Edge[](v2)(v5)
\Edge[](v2)(v6)
\Edge[](v3)(v5)
\Edge[](v3)(v6)
\Edge[](v4)(v6)
\end{tikzpicture} }
    \caption{All $4$-vertex-critical co-gem-free graphs.}
    \label{fig:4critcogemfree}
\end{figure}
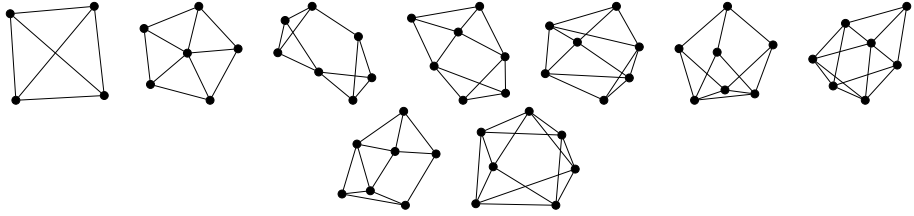

\section{$(\text{co-gem},P_5, P_3+cP_2)$-free graphs}\label{sec:2P2}

We note that the following theorem makes progress on the open problem about the finiteness of $k$-vertex-critical $(P_5,\text{co-gem})$-free graphs from~\cite{HuangLiXia2023}.
\begin{theorem}\label{thm:cogemP5P3UP2}
There are only finitely many $k$-vertex-critical $(\text{co-gem},P_5, P_3+cP_2)$-free graphs for all $k\ge 1$, and $c \ge 0$. 
\end{theorem}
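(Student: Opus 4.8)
The plan is to reduce to Theorem~\ref{thm:finiteP3ellP1freecrit}: it suffices to produce a constant $\ell=\ell(k,c)$ such that every $k$-vertex-critical $(\text{co-gem},P_5,P_3+cP_2)$-free graph $G$ is $(P_3+\ell P_1)$-free, since that family contains only finitely many $k$-vertex-critical members. The case $c=0$ is immediate, as a $P_3$-free graph is a disjoint union of cliques and the only $k$-vertex-critical such graph is $K_k$; so assume $c\ge1$. Recalling that co-gem is exactly $P_4+P_1$, suppose $G$ contains an induced $P_3+\ell P_1$ with $\ell$ larger than the bound derived below: an induced path $a-b-c$ together with an independent set $I$ of size $\ell$ anticomplete to $\{a,b,c\}$. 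I will bound $|I|$ by a constant depending only on $k$ and $c$, which forces the desired $(P_3+\ell P_1)$-freeness.

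First I would discard common neighbours. Let $W$ be the set of vertices complete to $I$; each vertex of $W$ lies in $N(i)$ for every $i\in I$, so $W$ cancels in any comparison of two neighbourhoods. I then classify the remaining neighbours of $i\in I$ by their adjacency to the path. No vertex $x$ is adjacent to exactly one endpoint of $a-b-c$: if $x\sim a$ and $x\nsim b,c$, then $x-a-b-c$ is an induced $P_4$, so co-gem-freeness forces every $i\in I$ to be adjacent to $x$ (else $\{x,a,b,c\}$ together with $i$ is a co-gem), and then $i-x-a-b-c$ is an induced $P_5$, a contradiction. Moreover, any neighbour $w$ of $I$ that meets $\{a,b,c\}$ but is not complete to it already lies in $W$: for instance if $w\sim b$ and $w\nsim c$ then $i-w-b-c$ is an induced $P_4$ with $w$ internal, so any $i'\in I$ with $i'\nsim w$ completes a co-gem, forcing $w$ complete to $I$; the other adjacency types are handled by the same device. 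Hence, outside $W$, a vertex $i\in I$ has neighbours only in $T$ (vertices complete to $\{a,b,c\}$ but not complete to $I$) and in $A$ (vertices anticomplete to $\{a,b,c\}$).

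The heart of the argument, and the step I expect to be the main obstacle, is bounding how finely the traces $N(i)\cap T$ and $N(i)\cap A$ can distinguish the members of $I$, since individual neighbourhoods need not be globally bounded. Two complementary forced patterns make this possible. Co-gem-freeness controls the far part $A$: if $y,y'\in A$ with $y\sim i\nsim y'$, $y'\sim i'\nsim y$, and $y\sim y'$, then $i-y-y'-i'$ is an induced $P_4$ to which $a$ is anticomplete, a co-gem; thus incomparable traces on $A$ have private parts that are completely \emph{non}-adjacent, so a system of such private parts forms an induced matching, whence a maximum induced matching of $G[A]$ has size at most $c-1$. The delicate work here is to show, via this property together with maximality of the matching, that the far trace $N(i)\cap A$ is determined (modulo $W$) by the intersection of $N(i)$ with the set $R$ of at most $2(c-1)$ endpoints of a fixed maximum induced matching. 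Dually, $P_5$-freeness controls the near part $T$: if $w,w'\in T$ with $w\sim i\nsim w'$ and $w'\sim i'\nsim w$, then $i-w-a-w'-i'$ is an induced $P_5$ unless $w\sim w'$, so incomparable traces on $T$ have completely \emph{joined} private parts, and since $\{a,b\}$ is an edge complete to $T$ we get $\omega(G[T])\le k-2$, which I would convert into a bound on the $T$-antichain in terms of $k$.

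Finally I would assemble the pieces using non-comparability and Sperner's Theorem. By Lemma~\ref{lem:nocomparablecliques} no two vertices of a vertex-critical graph are comparable, so for $i\neq i'$ the neighbourhoods $N(i),N(i')$ are incomparable; cancelling the universal set $W$ shows that the pairs $\big(N(i)\cap R,\ N(i)\cap T\big)$ form an antichain in the product poset. On the far coordinate the traces live in $\mathcal{P}(R)$ with $|R|\le 2(c-1)$, so Theorem~\ref{thm:Sperner} caps the number of pairwise-incomparable far traces by $\binom{2(c-1)}{c-1}$ — this is precisely where Sperner's Theorem absorbs the unbounded parameter $c$. Combining this with the clique bound on the $T$-side then bounds $|I|$ by a constant $N(k,c)$, so $G$ is $(P_3+(N(k,c)+1)P_1)$-free and the reduction is complete. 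Finiteness for all $c\ge0$ follows, and the corollary for $(\text{co-gem},2P_2)$-free graphs is the case $P_5,P_3+P_2$ after noting every $2P_2$-free graph of interest is captured.
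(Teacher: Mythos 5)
Your reduction skeleton is the same as the paper's (show the graph is $(P_3+\ell P_1)$-free for some $\ell(k,c)$ and invoke Theorem~\ref{thm:finiteP3ellP1freecrit}), and your preliminary classification of the neighbours of $I$ is correct: no vertex can be adjacent to exactly one endpoint of the $P_3$, and any vertex mixed on the path with a neighbour in $I$ must be complete to $I$. But the step you yourself flag as the main obstacle is a genuine gap, and as stated it fails. The forbidden configurations you identified on the far side live on $I$--$A$ edges, whereas your set $R$ is built from a maximum induced matching \emph{inside} $G[A]$ and sees only edges within $A$. Concretely, if $A$ is independent then $R=\emptyset$, yet vertices $i,i'\in I$ with $N(i)\cap A=\emptyset$ and $N(i')\cap A=\{y\}$ violate none of the constraints when $c\ge 2$, so distinct far traces survive and ``$N(i)\cap A$ is determined by $N(i)\cap R$'' is false. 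What your co-gem argument actually forces is that incomparable $A$-traces have anticomplete private parts, which (for small $c$) pushes the $A$-traces toward a long \emph{chain} rather than a bounded set of values --- and chains are fatal to your final count: an antichain in a product poset is not bounded by the product of coordinatewise antichain bounds (two opposite chains yield an arbitrarily large anti-diagonal), so you need the \emph{number of distinct} $A$-traces bounded, which is exactly what is unproven. The $T$-side has a second gap of the same flavour: pairwise incomparability gives you, for each pair $i,i'$, one adjacent pair of private vertices, but to invoke $\omega(G[T])\le k-2$ you need a simultaneous choice of privates forming a clique, and privates belonging to different pairs (e.g., two vertices both private to the same $i$) need not be adjacent; this conversion ``into a bound on the $T$-antichain'' is asserted, not proved.

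The paper escapes these difficulties by using Sperner's Theorem in the opposite direction, never attempting to bound neighbourhood types. It supposes an induced $P_3+c'P_1$ with $c'=\binom{kc}{\lfloor kc/2\rfloor}$, takes one representative per adjacency type among the vertices mixed on the independent part $S$ to form a set $U$, and observes via Lemma~\ref{lem:nocomparablecliques} that $\{N(s)\cap U : s\in S\}$ is an antichain of size $c'$ in $\mathcal{P}(U)$; Sperner then forces $|U|\ge kc$. Co-gem-, $P_5$- and $(P_3+cP_2)$-freeness give $\alpha(G[U])\le c$, hence $\chi(G[U])\ge kc/c=k$, contradicting $k$-vertex-criticality since deleting any vertex outside $U$ leaves chromatic number at least $k$. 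In other words, the unbounded parameter is absorbed by making the antichain huge and concluding the ground set is large, rather than by making the ground set small and concluding the antichain is small; if you want to rescue your plan you must prove a bound on the number of distinct $A$- and $T$-traces, and it is precisely this requirement that the paper's inversion of the Sperner step renders unnecessary.
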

\begin{proof}
For $k\le 4$, the result follows from Corollary~\ref{cor:finitelymany4critcogem}, so we may assume $k\ge 5$. Further, if $c=0$, then clearly the only $k$-vertex-critical $P_3$-free graph is $K_k$, so we may assume $c\ge 1$.
Therefore, let $k\ge 5$ and $c\ge 1$ be given positive integers and let $G$ be a $k$-vertex-critical $(\text{co-gem},P_5, P_3+cP_2)$-free graph. 
We will prove that $G$ is $(P_3+c'P_1)$-free for 

$$c'=\binom{kc}{\lfloor\frac{kc}{2}\rfloor}.$$ 

Suppose, by way of contradiction, that $G$ contains an induced $P_3+c'P_1$ and let $P=\{p_1,p_2,p_3\}$ and $S=\{s_1,s_2,\ldots,s_{c'}\}$ such that $P\cup S$ induces a $P_3+c'P_1$ where the $P_3$ is in order of the indices.  
Let $M$ be the set of all vertices in $V(G)$ that are mixed on $S$. 
Partition $M$ into sets such that all vertices with the exact same neighbours and non-neighbours in $S$ belong to the same set of the partition. 
Let $U$ be a subset of $M$ defined by taking exactly one vertex from each of the sets in the partition. We now make a series of claims about $U$.

\begin{claim}\label{cla:N(U)containsS}
$S\subseteq N(U)$.
\end{claim}
\begin{proof}[Proof of Claim~\ref{cla:N(U)containsS}]
Since $S$ is an independent set, from Lemma~\ref{lem:nocomparablecliques}, it follows that each vertex in $S$ must have a neighbor which is mixed on $S$. Therefore $S\subseteq N(M)$, and by definition of $U$, it is clear that $N(M)\cap S=N(U)\cap U$. So, $S\subseteq N(U)$.
\end{proof}

\begin{claim}\label{cla:SneighbourinUincomparable}
If $s_i,s_j\in S$ and $i\neq j$, then $N(s_i)\cap U\not\subseteq N(s_j)\cap U$ and $N(s_j)\cap U\not\subseteq N(s_i)\cap U$.
\end{claim}
\begin{proof}[Proof of Claim~\ref{cla:SneighbourinUincomparable}]
Since $S$ is an independent set, it follows from Lemma~\ref{lem:nocomparablecliques} that there is an $m_1\in N(s_i)\setminus N(s_j)$ and an $m_2\in N(s_j)\setminus N(s_i)$, otherwise $s_i$ and $s_j$ would be comparable. Since $m_1$ and $m_2$ are mixed on $S$, it follows that $\{m_1,m_2\}\subseteq M$. If $m_1\not\in U$, then by definition of $U$ there is a $u_1\in U$ such that $N(u_1)\cap S=N(m_1)\cap S$. And similarly, if $u_2\not\in U$, then there is a $u_2\in U$ such that $N(u_2)\cap S=N(m_2)\cap S$. Thus, $N(s_i)\cap U\not\subseteq N(s_j)\cap U$ and $N(s_j)\cap U\not\subseteq N(s_i)\cap U$.
\end{proof}

\begin{claim}\label{cla:Uorderatleastk}
$|U|\ge kc$.
\end{claim}
\begin{proof}[Proof of Claim~\ref{cla:Uorderatleastk}]
By Claims~\ref{cla:N(U)containsS} and~\ref{cla:SneighbourinUincomparable}, it follows that $A=\{N(s)\cap U: s\in S\}$ is an antichain in the partial order $(\mathcal{P}(U),\subseteq)$.

Since $$|A|=c'=\binom{kc}{\lfloor\frac{kc}{2}\rfloor},$$ it follows from Theorem~\ref{thm:Sperner} that $|U|\ge kc$. 
\end{proof}

\begin{claim}\label{cla:nonadjinUhavedistinctSneighbs}
    If $u_1,u_2\in U$ such that $u_1\nsim u_2$, then $N(u_1)\cap S\not\subseteq N(u_2)\cap S$ and $N(u_2)\cap S\not\subseteq N(u_1)\cap S$.
\end{claim}
\begin{proof}[Proof of Claim~\ref{cla:nonadjinUhavedistinctSneighbs}]
 Suppose by way of contradiction that $N(u_1)\cap S\subset N(u_2)\cap S$ or $N(u_2)\cap S\subset N(u_1)\cap S$ (note that we cannot have set equality in either case by the definition of $U$). 
 Without loss of generality, we may assume that that $N(u_1)\cap S\subset N(u_2)\cap S$.
So, there must be vertices $s_i,s_h\in S$ such that $s_i\in \left(N(u_2)\cap S\right)\cap\left(N(u_1)\cap S\right)$ and $s_h\in \left(N(u_2)\cap S\right)\setminus \left(N(u_1)\cap S\right)$.
Since $u_1$ and $u_2$ are both mixed on $S$, it follows that there must also be a vertex $s_j\in S\setminus \left(N(u_1)\cap N(u_2)\right)$.
But now, $\{u_1,s_i,u_2,s_h,s_j\}$ induces a co-gem in $G$, a contradiction.
\end{proof}

\begin{claim}\label{cla:indnumofUatmostc}
$\alpha(G[U]) \le c$. 
\end{claim}
\begin{proof}[Proof of Claim~\ref{cla:indnumofUatmostc}]
    Suppose by way of contradiction that $U$ contains an indpendent set of order at least $c+1$ and let $U'=\{u_1,u_2,\dots u_c,u_{c+1}\}$ be an independent subset of $U$. 
    Suppose there is a vertex $s_h\in S$ such that $u_i\sim s_h$ and $u_j\sim s_h$ for distinct $i,j\in \{1,\dots c+1\}$.  
    By Claim~\ref{cla:nonadjinUhavedistinctSneighbs}, there must also be $s_{i'},s_{j'}\in S$ such that $s_{i'}\sim u_i$, $s_{i'}\nsim u_j$, $s_{j'}\sim u_j$, and $s_{j'}\sim u_i$.
    But now, $\{s_{i'},u_i,s_h,u_j,s_{j'}\}$ induces a $P_5$, a contradiction.

    Therefore, $(N(u_i)\cap S)\cap (N(u_j)\cap S)=\emptyset$ for all distinct $i,j\in \{1,\dots c+1\}$.
    Also, by definition of $U$, every vertex in $U'$ has a neighbour in $S$.
    Without loss of generality (relabelling if necessary), assume $u_i\sim s_i$ for each distinct $i\in \{1,\dots c+1\}$.

    Therefore, $P\cup U' \cup \{s_1,s_2\dots,s_{c+1}\}$ induces a $P_3+cP_2$ unless at least two vertices in $U'$ have neighbours in $P$.
    Without loss of generality (relabelling again if necessary), we may assume $u_1$ and $u_2$ each have neighbours in $P$. 
    
    If $u_1\sim p_1$ or $u_1\sim p_3$, then $u_1\sim p_2$ otherwise $\{u_1,s_1,p_1,p_2,s_2\}$ or  $\{u_1,s_1,p_3,p_2,s_2\}$ induces a co-gem.
    Similarly, if $u_1\sim p_2$, then $u_1\sim p_1$ and $u_1\sim p_3$, otherwise $\{u_1,s_1,p_1,p_2,s_2\}$ or  $\{u_1,s_1,p_3,p_2,s_2\}$ induces a co-gem.
    Thus, $u_1$ is complete to $P$ and by symmetry $u_2$ is also complete to $u_2$.
    
    But now, $\{s_1,u_1,p,u_2,s_2\}$ induces a $P_5$ for any $p\in P$, a contradiction.
\end{proof}

\begin{claim}\label{cla:chromaticnumofUatleastk}
$\chi(G[U])\ge k$. 
\end{claim}
\begin{proof}[Proof of Claim~\ref{cla:chromaticnumofUatleastk}]
    We have $\frac{|U|}{\alpha(G[U])}\ge \frac{kc}{c}=k$ by Claims~\ref{cla:Uorderatleastk} and \ref{cla:chromaticnumofUatleastk}. 
    Further, $\frac{|U|}{\alpha(G[U])}\le \chi(G[U])$ by a folklore bound on the chromatic number. Therefore, we have $\chi(G[U])\ge k$.
\end{proof}

We now complete the proof of the theorem. Claim~\ref{cla:chromaticnumofUatleastk} contradicts $G$ being $k$-vertex-critical since we will have $\chi(G-v)\ge k$ for all $v\in V(G)\setminus U$. So, it must be that $G$ is $(P_3+c'P_1)$-free and therefore there are only finitely many $(\text{co-gem},P_5, P_3+cP_2)$-free graphs for all $k\ge 1$ by Theorem~\ref{thm:finiteP3ellP1freecrit}.
\end{proof}

Since $2P_2$ is an induced subgraph of both $P_5$ and $P_3+cP_2$ for any $c\ge 1$, the following corollary follows immediately form Theorem~\ref{thm:cogemP5P3UP2}.

\begin{corollary}\label{cor:cogem2P2}
    There are only finitely many $k$-vertex-critical $(\text{co-gem},2P_2)$-free graphs for all $k$.
\end{corollary}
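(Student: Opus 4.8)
The plan is to deduce this corollary directly from Theorem~\ref{thm:cogemP5P3UP2}, since the class of $2P_2$-free graphs sits inside the class handled there. The one structural fact I would verify first is that $2P_2$ occurs as an induced subgraph of both $P_5$ and $P_3+cP_2$ for every $c\ge 1$. For $P_5=v_1v_2v_3v_4v_5$ the vertex set $\{v_1,v_2,v_4,v_5\}$ induces exactly a $2P_2$, and for $P_3+cP_2$ with $c\ge 1$ a single edge of the $P_3$ together with any one of the $c$ disjoint copies of $P_2$ induces a $2P_2$ (the two edges lie in different components, hence are anticomplete). Both checks are immediate from exhibiting the four relevant vertices.

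Given this, I would argue by the contrapositive on forbidden subgraphs: if a graph $G$ is $2P_2$-free, then $G$ can contain neither an induced $P_5$ nor an induced $P_3+cP_2$ (for any $c\ge 1$), for otherwise it would inherit an induced $2P_2$. Hence every $(\text{co-gem},2P_2)$-free graph is in particular $(\text{co-gem},P_5,P_3+P_2)$-free, i.e.\ it lies in the class of Theorem~\ref{thm:cogemP5P3UP2} with the choice $c=1$.

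To finish, I would invoke Theorem~\ref{thm:cogemP5P3UP2} with $c=1$, which asserts that for each fixed $k$ there are only finitely many $k$-vertex-critical $(\text{co-gem},P_5,P_3+P_2)$-free graphs. Since $k$-vertex-criticality is an intrinsic property of a graph rather than of the ambient class, the $k$-vertex-critical $(\text{co-gem},2P_2)$-free graphs form a subcollection of the $k$-vertex-critical $(\text{co-gem},P_5,P_3+P_2)$-free graphs, and a subcollection of a finite collection is finite. This yields the claim for all $k$.

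I do not expect a genuine obstacle here: all of the real work has already been carried out in the proof of Theorem~\ref{thm:cogemP5P3UP2}, in particular the Sperner-type counting that bounds $\alpha(G[U])$ against $|U|$ to force $\chi(G[U])\ge k$. The only point deserving a moment's attention is the direction of the inclusion between graph classes---one must remember that forbidding the \emph{smaller} graph $2P_2$ forbids \emph{more} graphs and hence produces a \emph{subclass} of each of the $P_5$-free and $(P_3+cP_2)$-free classes, which is precisely what makes the reduction go through.
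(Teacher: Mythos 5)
Your proposal is correct and follows exactly the paper's route: the paper deduces the corollary from Theorem~\ref{thm:cogemP5P3UP2} by noting that $2P_2$ is an induced subgraph of both $P_5$ and $P_3+cP_2$ for any $c\ge 1$, so that $(\text{co-gem},2P_2)$-free graphs form a subclass of the $(\text{co-gem},P_5,P_3+cP_2)$-free graphs already handled. Your write-up merely makes explicit the inclusion checks and the ``subclass of a finite collection is finite'' step that the paper leaves implicit.
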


\section{$(\text{co-gem, paw}+P_1)$-free graphs}\label{sec:traingleplusP1}

\begin{theorem}\label{thm:cogempawUP1free}
    There are only finitely many $k$-vertex-critical $(\text{co-gem, paw}+P_1)$-free graphs for all $k$. 
\end{theorem}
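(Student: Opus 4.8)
The plan is to invoke Theorem~\ref{thm:finiteP3ellP1freecrit} by proving that every $k$-vertex-critical (co-gem, paw$+P_1$)-free graph $G$ is $(P_3+cP_1)$-free for some $c$ depending only on $k$; finiteness then follows at once. For $k\le 4$ I would appeal to Corollary~\ref{cor:finitelymany4critcogem}, so assume $k\ge 5$ and suppose toward a contradiction that $G$ contains an induced $P_3+cP_1$ with path $P=p_1p_2p_3$ and independent set $S=\{s_1,\ldots,s_c\}$ anticomplete to $P$, where $c$ is large. I would then reuse the framework of Theorem~\ref{thm:cogemP5P3UP2}: let $M$ be the set of vertices mixed on $S$, and let $U\subseteq M$ contain one representative of each distinct trace $N(v)\cap S$. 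By Lemma~\ref{lem:nocomparablecliques} (the $m=1$ case) no two vertices are comparable, so each $s_i$ has a neighbour in $M$ and hence $S\subseteq N(U)$; moreover the co-gem-freeness argument of Claim~\ref{cla:nonadjinUhavedistinctSneighbs} carries over verbatim, so \emph{nonadjacent} vertices of $U$ have incomparable traces on $S$, giving an antichain in $(\mathcal{P}(S),\subseteq)$.

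The genuinely new ingredient is paw$+P_1$-freeness, which I would use to control \emph{adjacent} mixed vertices. The key observation is this: if $u\sim w$ are both mixed on $S$, have a common neighbour $s_i\in S$, and $u$ has a private neighbour $s_j\in S$ (so $w\nsim s_j$), then $\{s_i,u,w,s_j\}$ induces a paw (triangle $s_is_iw$... the triangle $\{s_i,u,w\}$ with pendant $s_j\sim u$); consequently \emph{any} $s_\ell\in S$ anticomplete to $\{u,w\}$ completes a paw$+P_1$. Hence every such adjacent pair must dominate $S$, that is $S\subseteq N(u)\cup N(w)$. Combining this domination constraint with the co-gem antichain property, I expect to bound the number of distinct traces on $S$ that can coexist by a constant (in terms of $k$), and, together with the no-comparable-vertices condition bounding how many representatives a single trace can support, to force $|S|\le c$ for a suitable $c=c(k)$. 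The conclusion would then be reached exactly as at the end of Theorem~\ref{thm:cogemP5P3UP2}: a proper induced subgraph on the representatives (or on $S$) would have chromatic number at least $k$, contradicting vertex-criticality, or else a direct forbidden-subgraph contradiction would appear.

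The hard part will be the bookkeeping around vertices adjacent to the path $P$. Unlike $S$, the vertices $p_1,p_2,p_3$ are not mutually nonadjacent, so paw and co-gem configurations that straddle $P$ and $S$ behave differently from those living entirely inside $S$; in particular, pinning down the mixed behaviour of $U$-vertices on $P$ (the analogue of the final paragraph of Claim~\ref{cla:indnumofUatmostc}, where one shows each relevant vertex is complete or anticomplete to $P$) is where most of the casework should concentrate. I would handle this by first arguing, via co-gem-freeness, that a vertex adjacent to some $s\in S$ must be either complete or anticomplete to $P$, thereby decoupling the $P$-interaction from the $S$-trace analysis, and only then translate the paw-domination constraint into the desired constant bound on $c$.
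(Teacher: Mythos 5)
Your overall strategy (reduce to $(P_3+cP_1)$-freeness and invoke Theorem~\ref{thm:finiteP3ellP1freecrit}) matches the paper's, and several of your intermediate steps are sound: the antichain property for nonadjacent mixed vertices does carry over from Claim~\ref{cla:nonadjinUhavedistinctSneighbs}, and your paw observation is correct --- indeed, a symmetric co-gem argument handles nonadjacent pairs too (if $u_1\nsim u_2$ share $s_b$, $u_1$ has a private neighbour $s_a$, and some $s_e$ misses both, then $\{s_a,u_1,s_b,u_2,s_e\}$ is a co-gem), so \emph{any} two $U$-vertices with intersecting traces must jointly dominate $S$. But the step you defer --- ``I expect to bound the number of distinct traces on $S$ \dots by a constant (in terms of $k$)'' --- is exactly where the argument fails, and it is not a bookkeeping issue. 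Concretely, take traces $T_j=S\setminus\{s_j\}$ for $j=1,\dots,|S|$: these are pairwise incomparable, any two of them union to all of $S$, so every domination constraint you derived (for adjacent pairs via paw$+P_1$ and for nonadjacent pairs via co-gem) is vacuously satisfied, yet there are $|S|$ of them. Thus the constraints available to you do not cap the number of coexisting traces by any function of $k$, and the intended chain (bounded traces $\Rightarrow$ bounded $|U|$ $\Rightarrow$ Sperner bounds $|S|$) never gets started. Nor can you fall back on the $\alpha(G[U])\le c$ chromatic argument of Theorem~\ref{thm:cogemP5P3UP2}: Claim~\ref{cla:indnumofUatmostc} relies essentially on $P_5$- and $(P_3+cP_2)$-freeness, neither of which is available here, and paw$+P_1$-freeness is useless on $S\cup U'$ for $U'$ independent since that set induces a triangle-free graph and a paw contains a triangle.

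The missing realization is that the Sperner machinery is unnecessary: paw$+P_1$ together with co-gem already forbids $P_3+2P_1$, a fixed configuration independent of $k$. The paper's proof is a short direct case analysis: given an induced $P_3+2P_1$ on $\{p_1,p_2,p_3,s_1,s_2\}$, Lemma~\ref{lem:nocomparablecliques} produces $s_1'$ with $s_1'\sim s_1$ and $s_1'\nsim s_2$; the four cases on $N(s_1')\cap\{p_1,p_2,p_3\}$ each yield an induced paw$+P_1$ or co-gem, with the case where $s_1'$ is anticomplete to $P$ resolved by a second application of the lemma to the pair $p_1,p_3$ (the resulting vertex $p_1'$ with $p_1'\sim p_1$, $p_1'\nsim p_3$ is forced to be complete to $\{s_1,s_2,s_1'\}$, whereupon $\{p_1',s_1,s_1',s_2,p_3\}$ is a paw$+P_1$). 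Your remark that ``the hard part will be the bookkeeping around $P$'' is the tell: in the correct argument $P$ is the main actor rather than a nuisance --- the comparability lemma applied against the path's endpoints is what closes the proof, and no counting over $S$ is ever needed.
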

\begin{proof}
    Let $G$ be a $k$-vertex-critical $(\text{co-gem, paw}+P_1)$-free graph. We will show that $G$ must be $(P_3+2P_1)$-free and it will then follow from Theorem~\ref{thm:finiteP3ellP1freecrit} that there are only finitely many $k$-vertex-critical $(\text{co-gem, paw}+P_1)$-free graphs. Suppose by way of contradiction that $G$ is not $(P_3+2P_1)$-free and let $\{p_1,p_2,p_3,s_1,s_2\}$ induce a $P_3+2P_1$ in $G$ with $p_1p_2p_3$ being an induced $P_3$, in that order. From Lemma~\ref{lem:nocomparablecliques}, we must have a vertex $s_1'\in V(G)$ such that $s_1'\sim s_1$ but $s_1'\nsim s_2$. We now consider four cases depending on the neighbours of $s_1'$ in $\{p_1,p_2,p_3\}$.\\   

    \noindent\textit{Case 1.} $s_1'\sim p_2$ and at least one of $s_1'\sim p_1$ or $s_1'\sim p_3$.\\

    \noindent Suppose without loss of generality that $s_1'\sim p_1$. Then $\{p_2,p_1,s_1',s_1,s_2\}$ induces a $\text{paw}+P_1$, a contradiction. \\ 

    \noindent\textit{Case 2.} $s_1'\sim p_2$, $s_1'\nsim p_1$, and $s_1'\nsim p_3$.\\

    \noindent Then $\{s_1,s_1',p_2,p_1,s_2\}$ induces a co-gem, a contradiction. \\ 
    
    \noindent\textit{Case 3.} $s_1'\nsim p_2$ and at least one of $s_1'\sim p_1$ or $s_1'\sim p_3$.\\

    \noindent Suppose without loss of generality that $s_1'\sim p_1$. Then $\{p_2,p_1,s_1',s_1,s_2\}$ induces a co-gem, a contradiction. \\

     \noindent\textit{Case 4.} $s_1'\nsim p_2$, $s_1'\nsim p_1$, and $s_1'\nsim p_3$.\\

    \noindent Again from Lemma~\ref{lem:nocomparablecliques}, we must have $p_1'\in V(G)$ such that $p_1'\sim p_1$ but $p_1'\nsim p_3$.   Suppose that $p_1'\nsim x$ for some $x\in \{s_1,s_2,s_1'\}$. Now, if $p_2\sim p_1'$, then $\{p_2,p_1,p_1',p_3,x\}$ induces a $\text{paw}+P_1$, and if $p_2\nsim p_1'$, then $\{p_3,p_2,p_1,p_1',x\}$ induces a co-gem. Thus, $p_1'$ is complete to $\{s_1,s_2,s_1'\}$. 
    But now  $\{p_1',s_1,s_1',s_2,p_3\}$ induces a $\text{paw}+P_1$ in $G$, a contradiction.\\

    Since we reach a contradiction in each of the four cases above, we contradict the fact that $s_1'$ exists. Since $G$ is $k$-vertex-critical, we have already noted that $s_1'$ must exist. Therefore, this contradicts $G$ having an induced $P_3+2P_1$ and completes the proof. 
\end{proof}

Since $K_3+P_1$ is an induced subgraph of $\text{paw}+P_1$ the following corollary is immediate.

\begin{corollary}\label{cor:cogemtriangleP1}
There are only finitely many $k$-vertex-critical $(\text{co-gem, }K_3+P_1)$-free graphs for all $k$. \hfill $\square$
\end{corollary}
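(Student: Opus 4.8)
The plan is to deduce this finiteness statement directly from Theorem~\ref{thm:cogempawUP1free} via a containment of graph classes, with essentially no additional work. The key observation is structural: the paw is a triangle $K_3$ together with a single pendant vertex adjacent to exactly one vertex of the triangle, so deleting that pendant vertex from $\text{paw}+P_1$ leaves precisely $K_3+P_1$. Hence $K_3+P_1$ is an induced subgraph of $\text{paw}+P_1$.

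First I would invoke the standard monotonicity of forbidden induced subgraphs: if $H_1$ is an induced subgraph of $H_2$, then every graph that contains $H_2$ as an induced subgraph also contains $H_1$, so by contraposition every $H_1$-free graph is $H_2$-free. Applying this with $H_1 = K_3+P_1$ and $H_2 = \text{paw}+P_1$ shows that the class of $(\text{co-gem, }K_3+P_1)$-free graphs is contained in the class of $(\text{co-gem, paw}+P_1)$-free graphs.

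Next, since $k$-vertex-criticality is an intrinsic property of a graph (independent of the ambient class in which the graph is considered), every $k$-vertex-critical $(\text{co-gem, }K_3+P_1)$-free graph is in particular a $k$-vertex-critical $(\text{co-gem, paw}+P_1)$-free graph. Theorem~\ref{thm:cogempawUP1free} guarantees there are only finitely many of the latter for each $k$, so there can be only finitely many of the former, which completes the proof. There is no real obstacle here; the only point warranting care is the direction of the induced-subgraph containment, since it is $K_3+P_1$ that embeds as an induced subgraph into $\text{paw}+P_1$ (and not the reverse), which is precisely what makes the $(\text{co-gem, }K_3+P_1)$-free class the \emph{smaller} one and hence lets it inherit finiteness from the larger class.
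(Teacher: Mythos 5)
Your proposal is correct and matches the paper's own argument exactly: since $K_3+P_1$ is an induced subgraph of $\text{paw}+P_1$, every $(\text{co-gem, }K_3+P_1)$-free graph is $(\text{co-gem, paw}+P_1)$-free, and finiteness follows immediately from Theorem~\ref{thm:cogempawUP1free}. Your extra care about the direction of the containment is exactly the right point to check, and it is handled correctly.
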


\section{(co-gem, $K_4$)-free graphs}

Our result in this section relies completely on exhaustive computer search using Goedgebeur and Schaudt's program CriticalPfreeGraphs available at~\cite{CriticalPfreeGraph}. In short (and with oversimplification), the algorithm starts with a set of induced subgraphs that must be present in a vertex-critical graph and then extends the set in all possible ways, removing any graphs that have induced subgraphs that are forbidden by the user or when the chromatic number reaches $k$. The $k$-vertex-critical graphs are returned as a subset of this set (see Algorithms 1 and 2 in~\cite{GoedgebeurSchaudt2018} for full details).  The initial ideas for the algorithm were first presented in~\cite{Hoang2015} and used to generate all $5$-vertex-critical $(P_5,\ C_5)$-free graphs. This algorithm was then used as a starting point for an improved version that was used in~\cite{4critconnectedSODA2016} (see also the full version of the extended abstract~\cite{Chudnovsky4criticalconnected2020}) to show that there are only finitely many $4$-vertex-critical $P_6$-free graphs and give the complete list of them all. The algorithm was then thoroughly extended and optimized in~\cite{GoedgebeurSchaudt2018} where its strength was demonstrated by showing many new results on the finiteness of $4$-vertex-critical graphs in various families, including  $(P_7,\ C_5)$-free, $(P_8,\ C_4)$-free, $(P_4+2P_1)$-free, and $(P_3+P_2)$-free. Among the results from the new algorithm in~\cite{GoedgebeurSchaudt2018} were results on the colourability of various classes of graphs proved completely by the algorithm returning an empty list of $k$-vertex-critical graphs for various $k$. Some of these results included that the following graphs are $3$-colourable: $P_{11}$-free graphs of girth at least five, $P_{14}$-free graphs of girth at least six, $P_{17}$-free graphs of girth at least seven. Since the extension and optimization in~\cite{GoedgebeurSchaudt2018}, the algorithm has been used to prove many results on vertex-critical graphs, some of the most relevant to our discussions are:

\begin{itemize}
    \item Aiding with the proof that there are only finitely many $k$-vertex-critical $(P_5,\ K_4)$-free graphs for all $k$~\cite{KCameron2021},
    \item There is only one $6$-vertex-critical $(P_6\text{, diamond})$-free graph with clique number $3$, which was used in the proof that the chromatic number of $(P_6\text{, diamond, }K_4)$-free graphs can be computed in polynomial-time~\cite{GoedgebeurHuangJuMerkel2023}, and
    \item Classifying all $k$-vertex-critical $(P_5\text{, dart})$-free graphs for $k\in\{4,5,6,7\}$~\cite{Xiaetal2023}.
\end{itemize}

\begin{theorem}\label{thm:nocogemK45crit}
    There are no $5$-vertex-critical $(\text{co-gem}, K_4)$-free graphs.
\end{theorem}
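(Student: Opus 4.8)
The plan is to reduce the statement to a finite, exhaustive computer search, since no clean structural argument presents itself. The enabling observation is that, because co-gem is an induced subgraph of $P_6$, every (co-gem, $K_4$)-free graph is automatically $(P_6, K_4)$-free; hence the whole class sits inside the $P_6$-free graphs. This matters because the generation algorithm \texttt{CriticalPfreeGraphs} of Goedgebeur and Schaudt~\cite{GoedgebeurSchaudt2018,CriticalPfreeGraph} (building on~\cite{Hoang2015}) is tailored to $P_t$-free classes, so forbidding $P_6$ supplies the termination behaviour needed to enumerate $5$-vertex-critical graphs directly, rather than having to bound their order by hand first.

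Concretely, I would invoke the algorithm with target chromatic number $k=5$, forbidding $P_6$ (to drive termination, which is harmless since co-gem-freeness implies $P_6$-freeness) together with the two genuine obstructions co-gem and $K_4$. The routine starts from the small induced subgraphs forced in any vertex-critical graph and extends them in all possible ways, discarding any partial graph that acquires a forbidden induced subgraph or that reaches chromatic number $5$ without being critical; the surviving candidates with $\chi = 5$ are exactly the $5$-vertex-critical (co-gem, $K_4$)-free graphs, and the claim is that none survive. Before trusting the output I would sanity-check the configuration by rerunning for $k=4$ forbidding only co-gem (and $P_6$), which should reproduce precisely the nine $4$-vertex-critical co-gem-free graphs of Corollary~\ref{cor:finitelymany4critcogem}. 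Once the $k=5$ search returns the empty list, the stronger conclusion of the abstract follows immediately: every graph with chromatic number at least $5$ contains a $5$-vertex-critical induced subgraph, so emptiness of the list forces every (co-gem, $K_4$)-free graph to be $4$-colourable.

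I expect the main obstacle to be exhaustiveness and trust rather than mathematical subtlety: one must verify that the $P_6$-free termination guarantee genuinely applies so that the enumeration is complete (no critical graph escapes the pruning), and that the finite search space is small enough to traverse in practice. A secondary point explaining \emph{why} the computation is needed is that there is no obvious structural shortcut: forbidding $K_4$ caps the clique number at $3$, but $(P_4+P_1)$-freeness places no bound on the independence number (e.g.\ edgeless graphs are co-gem-free), so the usual Ramsey-type bounds on order do not apply and the problem does not reduce to inspecting a handful of small graphs.
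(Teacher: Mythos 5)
Your proposal is correct and takes essentially the same route as the paper: the paper's proof consists precisely of running \texttt{CriticalPfreeGraphs} with co-gem and $K_4$ forbidden and observing that the generation terminates (after reaching order $35$, hence requiring the 64-bit version) with an empty list of $5$-vertex-critical graphs, from which Corollary~\ref{cor:cogemK4free4colourable} follows exactly as you argue. One small correction: $P_6$-freeness does not by itself supply any termination guarantee --- there are infinitely many $5$-vertex-critical $2P_2$-free (hence $P_6$-free) graphs --- so termination here is an empirical fact driven by the combination of co-gem and $K_4$, not a structural consequence of sitting inside the $P_6$-free class.
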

\begin{proof}
    The program CriticalPfreeGraphs terminates with an empty list of $5$-vertex-critical $(\text{co-gem}, K_4)$-free graphs. More specifically, after $35$\footnote{Thus, the 64-bit version of the program is required as the standard version only support graphs of maximum order $32$.} vertices, the algorithm terminates with no possible extensions and no $5$-vertex-critical graphs generated.
\end{proof}

Since every graph with chromatic number at least $5$ must contain an induced $5$-vertex-critical subgraph and being $(\text{co-gem}, K_4)$-free is a hereditary property, we get the following corollary immediately.

\begin{corollary}\label{cor:cogemK4free4colourable}
    Every $(\text{co-gem}, K_4)$-free graph is $4$-colourable.
\end{corollary}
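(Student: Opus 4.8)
The plan is to deduce the corollary directly from Theorem~\ref{thm:nocogemK45crit} by invoking the standard fact that a sufficiently large chromatic number forces a vertex-critical induced subgraph. First I would argue by contradiction: suppose some (co-gem, $K_4$)-free graph $G$ is not $4$-colourable, so that $\chi(G)\ge 5$.

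Next I would extract from $G$ an induced subgraph that is exactly $5$-vertex-critical. Since deleting a single vertex lowers the chromatic number by at most one, I can delete vertices of $G$ one at a time until I reach an induced subgraph of chromatic number exactly $5$; equivalently, I take $H$ to be a vertex-minimal induced subgraph of $G$ with $\chi(H)\ge 5$. Minimality gives $\chi(H)=5$, and every proper induced subgraph of $H$ has chromatic number at most $4$. In particular $\chi(H-v)\le 4<5=\chi(H)$ for every $v\in V(H)$, so $H$ is $5$-vertex-critical by definition.

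Then I would invoke heredity: the class of (co-gem, $K_4$)-free graphs is closed under taking induced subgraphs, so $H$, being an induced subgraph of $G$, is itself (co-gem, $K_4$)-free. Thus $H$ is a $5$-vertex-critical (co-gem, $K_4$)-free graph, contradicting Theorem~\ref{thm:nocogemK45crit}, which asserts that no such graph exists. Hence $\chi(G)\le 4$ for every (co-gem, $K_4$)-free graph $G$, establishing the corollary.

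The argument carries no substantive obstacle beyond bookkeeping: the only points requiring care are confirming that a vertex-minimal induced subgraph of chromatic number $5$ is genuinely $5$-vertex-critical (rather than merely $5$-chromatic), and that forbidding a fixed set of induced subgraphs is a hereditary condition preserved under vertex deletion. Both are routine, so the entire weight of the result rests on the computational Theorem~\ref{thm:nocogemK45crit}, which the corollary simply repackages as a colouring statement.
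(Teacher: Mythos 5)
Your proposal is correct and matches the paper's argument exactly: the paper derives the corollary from Theorem~\ref{thm:nocogemK45crit} in one sentence via the same two observations you verify in detail, namely that any graph with chromatic number at least $5$ contains an induced $5$-vertex-critical subgraph (your vertex-minimal $H$) and that $(\text{co-gem}, K_4)$-freeness is hereditary. Your write-up merely makes the standard extraction of a vertex-critical subgraph explicit, which the paper treats as immediate.
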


\section{Proof of Theorem~\ref{thm:cogemHfreeord4}}\label{sec:proofofmainthm}
We are now ready to prove our main theorem.

\begin{proof}[Proof of Theorem~\ref{thm:cogemHfreeord4}]
For the graphs names of order four in this proof, please refer to Figure~\ref{fig:graphsoforder4}. Let $H$ be a graph of order $4$. For $k=1,2$, the result is trivial. For $k=3$, the only $k$-vertex-critical co-gem-free graphs are $K_3$ and $C_5$, since every other odd cycle contains induced co-gems. For $k=4$, the result is known from Corollary~\ref{cor:finitelymany4critcogem}. 

To complete the proof, for all $k\ge 5$, there are only finitely many $k$-vertex-critical $(\text{co-gem, } H)$-free graphs when $H$ is:

\begin{itemize}
    \item $\overline{K_4}$ by Ramsey's Theorem~\cite{Ramsey};
    \item $P_2+2P_1$ since it was shown in~\cite{CameronHoangSawada2022} that there are only finitely many $k$-vertex-critical $(P_2+2P_1)$-free graphs for all $k$,
    \item $P_3+P_1$ since it was shown in~\cite{KCameron2021} that there are only finitely many $k$-vertex-critical $(P_3+P_1)$-free graphs for all $k$ (see also the proofs in \cite{CameronHoangSawada2022} and \cite{AbuadasCameronHoangSawada2022});
    \item $2P_2$ from Corollary~\ref{cor:cogem2P2};
    \item claw since it was shown in~\cite{Kaminski2019} that there are only finitely many $k$-vertex-critical $(P_6, \text{claw})$-free graphs for all $k$; 
    \item $P_4$ since $P_4$-free graphs are perfect as outlined in the introduction;
    \item $K_3+P_1$ from Corollary~\ref{cor:cogemtriangleP1};
    \item paw by Theorem~\ref{thm:gemcogem} since paw is an induced subgraph of gem (note that it also now follows from Theorem~\ref{thm:cogempawUP1free} with a much shorter proof);
    \item $C_4$ since it was shown in~\cite{Kaminski2019} that there are only finitely many $k$-vertex-critical $(P_6, C_4)$-free graphs for all $k$;
    \item diamond by Theorem~\ref{thm:gemcogem} since diamond is an induced subgraph of gem;
    \item $K_4$ by Corollary~\ref{cor:cogemK4free4colourable}.
    
\end{itemize}

\end{proof}

\section{Conclusion}\label{sec:conclusion}

While Theorem~\ref{thm:cogemHfreeord4} is a good step forward for the study of co-gem free graphs, the finiteness of $k$-vertex-critical co-gem-free graphs remains an open problem for all $k\ge 5$. Again using the CritcalPfreeGraphs~\cite{CriticalPfreeGraph} program we were able to generate all $5$-vertex-critical co-gem-free graphs of order at most 18 and we find that there are 327 total graphs available in graph6 format at~\cite{cogemfreefreefiles}; 1 of order 5, 1 of order 7, 7 of order 8, 228 of order 9, 70 of order 10, 16 of order 11, and 4 of order 12. Given that there are no $5$-vertex-critical co-gem-free graphs of order $n$ for $13\le n\le 18$, we pose the following conjecture.

\begin{conjecture}
    There are only finitely many $5$-vertex-critical co-gem-free graphs, and the list of 327 at~\cite{cogemfreefreefiles} is complete.
\end{conjecture}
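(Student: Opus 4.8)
The plan is to follow the same high-level strategy as Theorems~\ref{thm:cogemP5P3UP2} and~\ref{thm:cogempawUP1free}: show that every $5$-vertex-critical co-gem-free graph $G$ is $(P_3+cP_1)$-free for some absolute constant $c$, and then invoke Theorem~\ref{thm:finiteP3ellP1freecrit}. Two extra structural handles are available because $k=5$ is fixed. First, the only $5$-vertex-critical graph with $\omega(G)=5$ is $K_5$ itself, since deleting any vertex outside a $K_5$ leaves a $5$-chromatic subgraph, contradicting criticality; so we may assume $\omega(G)\le 4$. Second, co-gem-freeness yields a domination property: if $Q$ is the vertex set of any induced $P_4$, then every $v\in V(G)\setminus Q$ has a neighbour in $Q$, as otherwise $Q\cup\{v\}$ induces a co-gem. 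Equivalently, every induced $P_4$ dominates $G$. Moreover, excluding $K_5$, a $5$-chromatic co-gem-free graph is not a cograph (cographs are perfect), so $G$ contains an induced $P_4$ and hence a dominating set of size four.

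First I would reuse the mixed-vertex machinery from the proof of Theorem~\ref{thm:cogemP5P3UP2}. Assume $G$ contains an induced $P_3+c'P_1$ with $P=\{p_1,p_2,p_3\}$ and $S=\{s_1,\ldots,s_{c'}\}$, let $M$ be the vertices mixed on $S$, and let $U$ be a transversal of the classes of $M$ having identical neighbourhoods in $S$. Crucially, Claims~\ref{cla:N(U)containsS},~\ref{cla:SneighbourinUincomparable},~\ref{cla:Uorderatleastk}, and~\ref{cla:nonadjinUhavedistinctSneighbs} use only Lemma~\ref{lem:nocomparablecliques}, co-gem-freeness, and Sperner's Theorem~\ref{thm:Sperner}; none of them requires $P_5$- or $(P_3+cP_2)$-freeness. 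Thus, taking $c'=\binom{5c}{\lfloor 5c/2\rfloor}$, they carry over verbatim and give $|U|\ge 5c$ together with the fact that nonadjacent vertices of $U$ have incomparable neighbourhoods in $S$. The entire problem therefore collapses, exactly as before, to producing a constant bound $\alpha(G[U])\le c$, after which $\chi(G[U])\ge |U|/\alpha(G[U])\ge 5$ contradicts criticality.

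The hard part is precisely this last step, the analogue of Claim~\ref{cla:indnumofUatmostc}. In Theorem~\ref{thm:cogemP5P3UP2} the bound $\alpha(G[U])\le c$ was extracted from $P_5$- and $(P_3+cP_2)$-freeness, neither of which holds here: a large independent subset of $U$ produces only long induced paths and disjoint edges, both permitted in a co-gem-free graph, so the previous argument simply breaks down. My proposal is to replace it using the $P_4$-domination property. Fix a dominating induced $P_4$ on $Q=\{a,b,c,d\}$ and, given an independent set $U'\subseteq U$, pigeonhole a large subset $U''\subseteq U'$ all complete to one vertex of $Q$, say $a$. Each $u\in U''$ is mixed on $S$ with pairwise incomparable $S$-neighbourhoods, so for $u_1,u_2\in U''$ one can choose $s_x\in N(u_1)\setminus N(u_2)$ and $s_y\in N(u_2)\setminus N(u_1)$, and the aim is to leverage $\omega(G)\le 4$ together with co-gem-freeness to force $|U''|$, and hence $\alpha(G[U])$, to be bounded by an absolute constant. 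I expect making this pigeonhole-and-forbidden-subgraph analysis actually close to be the main obstacle: a single configuration such as $\{a,u_1,u_2,s_x,s_y\}$ can induce a (permitted) $P_5$ or bowtie rather than a co-gem, so the full interaction of $U''$ with the dominating $P_4$ must be exploited, and this is exactly where the argument currently has a gap.

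As a complementary route, one might try to bound $|V(G)|$ directly: the dominating $P_4$ gives $V(G)=N[a]\cup N[b]\cup N[c]\cup N[d]$, and each $G[N(v)]$ is $(\text{co-gem},K_4)$-free, since a $K_4$ in $N(v)$ together with $v$ would contradict $\omega(G)\le 4$, hence $4$-colourable by Corollary~\ref{cor:cogemK4free4colourable}. This controls the chromatic number inside each neighbourhood but not its order, so a finer structural description of $(\text{co-gem},K_4)$-free graphs of bounded chromatic number would be needed to finish. The computational evidence (no examples of order between $13$ and $18$) strongly suggests the required constant exists; converting it into a proof hinges on the independence-number bound for $U$ above, which remains the crux.
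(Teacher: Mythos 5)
You are attempting to prove a statement that the paper itself does not prove: it is posed as an open conjecture in Section~\ref{sec:conclusion}, supported only by an exhaustive computer search showing there are no $5$-vertex-critical co-gem-free graphs of order $13$ through $18$. So there is no paper proof to match, and your proposal must stand on its own --- and it does not, as you candidly acknowledge. Your preliminary observations are sound: $\omega(G)\le 4$ unless $G=K_5$, every induced $P_4$ dominates a co-gem-free graph, and Claims~\ref{cla:N(U)containsS}--\ref{cla:nonadjinUhavedistinctSneighbs} indeed use only Lemma~\ref{lem:nocomparablecliques}, co-gem-freeness, and Theorem~\ref{thm:Sperner}, so they port over with $c'=\binom{5c}{\lfloor 5c/2\rfloor}$. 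But the entire weight of Theorem~\ref{thm:cogemP5P3UP2} rests on Claim~\ref{cla:indnumofUatmostc}, the bound $\alpha(G[U])\le c$, and that is exactly the step where $P_5$- and $(P_3+cP_2)$-freeness were consumed: $P_5$-freeness forced the $S$-neighbourhoods of an independent set in $U$ to be pairwise disjoint, and $(P_3+cP_2)$-freeness then capped its size. With neither hypothesis available, your replacement via the dominating $P_4$ has no teeth: a large independent set $U''$ complete to a single vertex $a$ of the $P_4$ is a star plus an independent set, which is simultaneously consistent with $\omega(G)\le 4$, co-gem-freeness, and incomparable $S$-neighbourhoods, so no forbidden configuration arises and no bound on $|U''|$ follows. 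This is not a fixable presentation issue but a missing idea; indeed, were the independence-number bound provable from co-gem-freeness and $\omega\le 4$ alone, the conjecture's finiteness half would follow and the authors would not have left it open.

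Two further points. First, even if you closed the gap, your route proves only \emph{finiteness}; the conjecture also asserts \emph{completeness} of the list of $327$ graphs, which requires an explicit order bound followed by exhaustive search up to that bound. The bound implicit in your approach (through $c'=\binom{5c}{\lfloor 5c/2\rfloor}$ and Theorem~\ref{thm:finiteP3ellP1freecrit}) would be astronomically larger than $18$, so the existing computation at~\cite{cogemfreefreefiles} would not certify completeness. Second, your fallback route correctly shows each $G[N(v)]$ is $(\text{co-gem},K_4)$-free and hence $4$-colourable by Corollary~\ref{cor:cogemK4free4colourable}, but as you note this bounds $\chi$ of pieces of $G$, not $|V(G)|$; bounded chromatic number plus a dominating set of size four places no finite bound on order (complete multipartite examples already show neighbourhoods of bounded chromatic number can be arbitrarily large). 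In short: the reusable machinery is identified correctly, but the crux --- the analogue of Claim~\ref{cla:indnumofUatmostc} --- is a genuine gap, and the statement remains a conjecture.
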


The natural next step after Theorem~\ref{thm:cogemHfreeord4} would be to solve the following open problem.

\begin{problem}
    For which graphs $H$ of order $5$ are there only finitely many $k$-vertex-critical $(\text{co-gem, } H)$-free graphs for all $k$?
\end{problem}

One such $H$ of order $5$ that might be a good starting place is bull, since using the CriticalPfreeGraphs program again we find that all $k$-vertex-critical $(\text{co-gem, bull})$-free graphs are exactly the same as the $k$-vertex-critical $(P_3+P_1)$-free graphs for all $k\le 6$.

In light of Theorem~\ref{thm:nocogemK45crit}, it may seem reasonable to conjecture that there are no $6$-vertex-critical $(\text{co-gem, }K_5)$-free graphs, but that turns out to be false. Using the same program to generate such graphs, we find that there are 1479 such graphs of order at most $12$ (1 of order $10$, $111$ of order $11$, and $1367$ of order $12$). However, there are no $6$-vertex-critical $(\text{co-gem, } C_5,\ K_5)$-free graphs, thus we get the following:

\begin{proposition}
    Every $(\text{co-gem, }C_5,\ K_5)$-free graphs is $5$-colourable.
\end{proposition}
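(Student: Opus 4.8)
The plan is to reduce the colouring statement to the nonexistence of a single critical obstruction and then verify that nonexistence by exhaustive generation, exactly as was done for Corollary~\ref{cor:cogemK4free4colourable}. First I would record the standard reduction: if some $(\text{co-gem},C_5,K_5)$-free graph $G$ had $\chi(G)\ge 6$, then $G$ would contain an induced subgraph $G'$ with $\chi(G')=6$ and $\chi(G'-v)<6$ for every $v\in V(G')$, that is, a $6$-vertex-critical induced subgraph. Since being $(\text{co-gem},C_5,K_5)$-free is a hereditary property, $G'$ is itself $(\text{co-gem},C_5,K_5)$-free. Hence it suffices to prove that there are \emph{no} $6$-vertex-critical $(\text{co-gem},C_5,K_5)$-free graphs.

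Second, I would establish that nonexistence by running Goedgebeur and Schaudt's CriticalPfreeGraphs program~\cite{CriticalPfreeGraph} with forbidden set $\{\text{co-gem},C_5,K_5\}$ and target chromatic number $k=6$, in direct analogy with Theorem~\ref{thm:nocogemK45crit}, which handled $(\text{co-gem},K_4)$ at $k=5$. The program builds up all possible vertex-critical candidates, pruning any branch that creates a forbidden induced subgraph or reaches chromatic number $6$, and I would report that it terminates with an empty list (recording the maximal order reached before every branch closes, as in Theorem~\ref{thm:nocogemK45crit}). Combining the two steps yields a contradiction with $\chi(G)\ge 6$, so every $(\text{co-gem},C_5,K_5)$-free graph is $5$-colourable.

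The hard part will be guaranteeing that the generation actually terminates, i.e.\ that the search tree is finite. This is precisely why the extra forbidden graph $C_5$ is essential: the authors observe that there \emph{are} $6$-vertex-critical $(\text{co-gem},K_5)$-free graphs (indeed $1479$ of order at most $12$), so forbidding $K_5$ alone does not close off the search, whereas additionally forbidding $C_5$ appears to bound the order of the candidates and force termination. Making this rigorous amounts to relying on the correctness and completeness of the CriticalPfreeGraphs generation as established in~\cite{GoedgebeurSchaudt2018}. As an alternative route one could instead try to prove directly that any such $6$-vertex-critical graph is $(P_3+cP_1)$-free for some constant $c$ and then invoke Theorem~\ref{thm:finiteP3ellP1freecrit}; however, this would only deliver \emph{finiteness} of the critical family rather than the \emph{emptiness} that $5$-colourability requires, so an exhaustive verification over the bounded-order candidates would still be unavoidable. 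For this reason I regard the computer search as the natural and essentially necessary engine of the proof.
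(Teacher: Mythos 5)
Your proposal matches the paper's proof exactly: the authors also run the CriticalPfreeGraphs program with forbidden set $\{\text{co-gem},\,C_5,\,K_5\}$ at $k=6$, find that it terminates with an empty list of $6$-vertex-critical graphs, and conclude $5$-colourability via the standard hereditary/critical-subgraph reduction, in direct analogy with Theorem~\ref{thm:nocogemK45crit}. Your remarks on why forbidding $C_5$ is essential (since $6$-vertex-critical $(\text{co-gem},K_5)$-free graphs do exist) likewise mirror the paper's own discussion.
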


In each case, the set of forbidden induced subgraphs leave only one odd antihole to build all of the imperfect $k$-vertex-critical graphs on from the Strong Perfect Graph Theorem~\cite{Chudnovsky2006}. This leads us to the following conjecture.

\begin{conjecture}\label{conj:finalconj}
    Every $(\text{co-gem, } \overline{C_{5}},\overline{C_{7}},...,\overline{C_{2k-5}}, K_k)$-free graph is $k$-colourable.
\end{conjecture}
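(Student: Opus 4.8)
The plan is to combine the Strong Perfect Graph Theorem~\cite{Chudnovsky2006} with a census of the odd holes and antiholes that can survive in a co-gem-free graph, and then to localise the remaining imperfection to a single antihole that costs only one extra colour.

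First I would determine exactly which odd holes and odd antiholes can occur as induced subgraphs of a co-gem-free graph. For odd holes: every $C_n$ with $n\ge 7$ contains an induced co-gem (take four consecutive vertices for the $P_4$ and a fifth vertex nonadjacent to all of them for the $P_1$), so the only available odd hole is $C_5=\overline{C_5}$. For odd antiholes: since $C_n$ has maximum degree $2$ it never contains the gem (which has a vertex of degree $4$), so $\overline{C_n}$ is co-gem-free for every odd $n\ge 5$, and thus every odd antihole is a priori available. Now let $G$ be $(\text{co-gem},\overline{C_5},\ldots,\overline{C_{2k-5}},K_k)$-free. Forbidding $\overline{C_5}$ removes the unique possible odd hole, so $G$ has no odd hole at all. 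Using $\omega(\overline{C_m})=\lfloor m/2\rfloor$, the hypothesis that $G$ is $K_k$-free rules out every $\overline{C_m}$ with $m\ge 2k+1$ (these contain $K_k$), while $\overline{C_5},\ldots,\overline{C_{2k-5}}$ are forbidden directly. Hence, by the Strong Perfect Graph Theorem, $G$ is either perfect or contains an induced $\overline{C_{2k-3}}$ or $\overline{C_{2k-1}}$, and these two antiholes are the only obstructions to perfection.

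If $G$ is perfect then $\chi(G)=\omega(G)\le k-1<k$ and we are done, so the whole difficulty lies in the imperfect case. My target would be the stronger structural statement that $G$ contains an independent set $I$ for which $G-I$ is perfect: since $G$ has no odd hole, $G-I$ is perfect exactly when $I$ meets every induced $\overline{C_{2k-3}}$ and every induced $\overline{C_{2k-1}}$ of $G$, and then $\chi(G)\le \chi(G-I)+1=\omega(G-I)+1\le (k-1)+1=k$. To build $I$ I would fix a longest antihole $C$ in $G$ (so $C\cong \overline{C_{2k-1}}$, or $\overline{C_{2k-3}}$ if the former never occurs) and analyse how each vertex $u\notin C$ attaches to it. Co-gem-freeness is very restrictive here: writing $C$ as $\overline{C_{2k-1}}$ on $v_0,\dots,v_{2k-2}$, the non-neighbourhood $C\setminus N(u)$ cannot contain an induced $P_4$ (otherwise $u$ together with that $P_4$ is a co-gem), which forces the non-neighbours of $u$ to form short consecutive arcs of the underlying $(2k-1)$-cycle; analysing $u$ as a vertex of a $P_4$ meeting $C$, together with the forbidden shorter antiholes and $K_k$-freeness, should pin down the finitely many admissible attachment types and show that the antiholes of $G$ are ``aligned'' enough to admit a common stable transversal. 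The no-comparable-vertices property (Lemma~\ref{lem:nocomparablecliques}) can be used throughout to discard degenerate configurations, exactly as in the proofs of Theorems~\ref{thm:cogemP5P3UP2} and~\ref{thm:cogempawUP1free}.

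The hard part will be precisely this last step: turning the local attachment restrictions into a global guarantee that a single independent set hits all odd antiholes simultaneously (equivalently, that the imperfection of $G$ is confined to ``one antihole's worth'' of colour). Different induced copies of $\overline{C_{2k-1}}$ may overlap in complicated ways, and there is no general reason that a family of antiholes has a stable transversal, so the argument must exploit the co-gem structure globally rather than one antihole at a time. This is the content that the computer search only verified for $k=5$, and it is where I expect the real work to lie — along with the risk that the clean stable-set formulation must be replaced by a direct $(k+1)$-vertex-critical contradiction built from the attachment analysis.
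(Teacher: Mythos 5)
The statement you were asked to prove is Conjecture~\ref{conj:finalconj}: the paper itself offers \emph{no} proof of it, only computational evidence for the two smallest cases ($k=4$ via Corollary~\ref{cor:cogemK4free4colourable}, where the antihole list is empty, and $k=5$ via the proposition that $(\text{co-gem},C_5,K_5)$-free graphs are $5$-colourable, both obtained by exhaustive search for vertex-critical graphs). So there is no paper proof to compare against, and the honest verdict is that your proposal is an attack plan, not a proof. Its first half is correct and in fact more careful than the paper's own closing remark: odd holes $C_n$ with $n\ge 7$ contain co-gems, so $C_5=\overline{C_5}$ is the only odd hole available; every odd antihole is co-gem-free since $C_n$ contains no gem; and $K_k$-freeness eliminates $\overline{C_m}$ for odd $m\ge 2k+1$ because $\omega(\overline{C_m})=\lfloor m/2\rfloor$. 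Together with the explicitly forbidden antiholes this leaves exactly $\overline{C_{2k-3}}$ and $\overline{C_{2k-1}}$ as the possible obstructions to perfection (the paper's conclusion loosely says ``only one odd antihole''; your count of two is the accurate one). The reduction is also sound: if an independent set $I$ meets every induced copy of these two antiholes, then $G-I$ is perfect by the Strong Perfect Graph Theorem and $\chi(G)\le \omega(G-I)+1\le k$.

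The genuine gap is the one you flag yourself: nothing in the proposal establishes that such a stable transversal of all long antiholes exists, and that existence (or some substitute) \emph{is} the entire open content of the conjecture. Two specific cautions about the plan as written. First, the target ``$G$ contains an independent set whose deletion leaves a perfect graph'' is strictly stronger than $k$-colourability, and it is not clear it follows from the hypotheses even if the conjecture is true; distinct copies of $\overline{C_{2k-1}}$ can overlap incompatibly, and a family of antiholes has no general stable transversal, so this formulation may have to be abandoned for a minimal-counterexample argument on a $(k{+}1)$-vertex-critical graph, as you anticipate. Second, your local attachment analysis needs a small correction: a co-gem arises when the non-neighbours of $u$ on $C$ induce a $P_4$ \emph{in the antihole}, and since $\overline{P_4}=P_4$ this says the non-neighbours contain no four consecutive vertices of the underlying $(2k-1)$-cycle, i.e., they form a union of arcs each of at most three vertices --- there may be many such arcs, so the restriction is weaker than ``short consecutive arcs'' suggests, and by itself it leaves a large space of attachment types whose global alignment is exactly the unsolved step. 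In short: correct set-up, correct identification of where the difficulty lies, but no proof --- which is consistent with the statement's status in the paper as an open conjecture.
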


\section*{Acknowledgements}
The second author gratefully acknowledges research support from the Natural Sciences and Engineering Research Council of Canada (NSERC), grants RGPIN-2022-03697 and DGECR-2022-00446. Both authors also thank Jan Goedgebeur for helpful discussions on using their program and for even going so far as to run some computations for us. We also acknowledge Angeliya C. U., S. A. Choudum, and Mayamma Joseph for finding a counterexample in a conjecture in an earlier version of this paper.

\bibliographystyle{abbrv}
\bibliography{refs}

\end{document}